\documentclass[11pt]{amsart}

\usepackage{amsmath}
\usepackage{amssymb}
\usepackage{bm}
\usepackage{graphicx}
\usepackage{psfrag}
\usepackage{color}
\usepackage{xcolor}
\usepackage{listings}

\definecolor{keywordcolor}{rgb}{0.2, 0.2, 0.75}
\definecolor{stringcolor}{rgb}{0.0, 0.5, 0.0}
\definecolor{commentcolor}{rgb}{0.5, 0.5, 0.5}
\definecolor{backgroundcolor}{rgb}{0.95, 0.95, 0.95}
\definecolor{numbercolor}{rgb}{0.3, 0.3, 0.3}

\lstdefinestyle{pythonstyle}{
    language=Python,
    backgroundcolor=\color{backgroundcolor},
    basicstyle=\ttfamily\small,
    keywordstyle=\color{keywordcolor}\bfseries,
    stringstyle=\color{stringcolor},
    commentstyle=\color{commentcolor}\itshape,
    numberstyle=\tiny\color{numbercolor},
    numbers=left,
    numbersep=10pt,
    showstringspaces=false,
    breaklines=true,
    frame=single,
    tabsize=4,
    captionpos=b
}

\definecolor{red}{rgb}{1,0,0}
\definecolor{felix}{rgb}{1,0,0}

\definecolor{pink}{rgb}{1,0,0.4}
\definecolor{marly}{rgb}{1,0,0.4}

\definecolor{blue}{rgb}{0,0,1}
\definecolor{joseph}{rgb}{0,0,1}

\definecolor{anna}{HTML}{d595fc}

\definecolor{arav}{rgb}{0.037,0.43,0.631}

\definecolor{bryan}{HTML}{fc9221}

\definecolor{jason}{HTML}{4a8e1f}

\definecolor{jiya}{HTML}{4c2dbd}

\usepackage{hyperref}
\hypersetup{colorlinks=true, linkcolor=blue, citecolor=magenta, urlcolor=green}
\usepackage{url}
\usepackage{algpseudocode}
\usepackage{fancyhdr}
\usepackage{mathtools}
\usepackage{tikz-cd}
\usepackage{xy}
\input xy
\xyoption{all}
\usepackage{stmaryrd}
\usepackage{calrsfs}
\usepackage{cleveref}

\voffset=-1.4mm
\oddsidemargin=14pt
\evensidemargin=14pt
\topmargin=26pt
\headheight=9pt     
\textheight=576pt
\textwidth=441pt
\parskip=0pt plus 4pt

\pagestyle{fancy}
\fancyhf{}

\fancyhead[CE]{\fontsize{9}{11}\selectfont J. DANI, A. DENG, M. GOTTI, B. LI, A. PALADIYA, J. VULAKH, AND J. ZENG}
\fancyhead[CO]{\fontsize{9}{11}\selectfont ON ATOMS AND STRONG ATOMS IN ADDITIVE MONOIDS OF CYCLIC SEMIDOMAINS}
\fancyhead[LE,RO]{\thepage}
\setlength{\headheight}{9pt}


\newtheorem{theorem}{Theorem}[section]
\newtheorem{lemma}[theorem]{Lemma}
\newtheorem{proposition}[theorem]{Proposition}
\newtheorem{corollary}[theorem]{Corollary}

\theoremstyle{definition}
\newtheorem{definition}[theorem]{Definition}
\newtheorem{example}[theorem]{Example}

\theoremstyle{remark}

\numberwithin{equation}{section}

\newcommand{\aaa}{\mathbb{A}}
\newcommand{\cc}{\mathbb{C}}

\newcommand{\nn}{\mathbb{N}}
\newcommand{\N}{\mathbb{N}}
\newcommand{\pp}{\mathbb{P}}

\newcommand{\qq}{\mathbb{Q}}
\newcommand{\Q}{\mathbb{Q}}
\newcommand{\rr}{\mathbb{R}}

\newcommand{\zz}{\mathbb{Z}}
\newcommand{\Z}{\mathbb{Z}}
\newcommand{\nni}{\nn_0 \cup \{\infty\}}

\providecommand\ldb{\llbracket}
\providecommand\rdb{\rrbracket}

\newcommand{\supp}{\text{supp}\,}

\newcommand{\uu}{\mathcal{U}}

\keywords{atom, strong atom, atomic monoid, torsion-free monoid, cyclic semiring}

\subjclass[2020]{Primary: 20M13, 11R04, 20M14; Secondary: 16Y60, 11S05, 11R09}


\begin{document}
	
\mbox{}
\title{On the set of atoms and strong atoms in additive monoids of cyclic semidomains}

\author{Jiya Dani}
\address{CMI\\MIT\\Cambridge, MA 02139}
\email{cs.program2004@gmail.com}

\author{Anna Deng}
\address{CMI\\MIT\\Cambridge, MA 02139}
\email{annadeng08@gmail.com}

 \author{Marly Gotti}
 \address{Apple Inc.\\One Apple Park Way\\Cupertino, CA 95014}
 \email{marlygotti@apple.com}

\author{Bryan Li}
\address{CMI\\MIT\\Cambridge, MA 02139}
\email{wowo2888@gmail.com}

\author{Arav Paladiya}
\address{CMI\\MIT\\Cambridge, MA 02139}
\email{aravpaladiya1@gmail.com}

\author{Joseph Vulakh}
\address{Department of Mathematics\\MIT\\Cambridge, MA 02139}
\email{jvulakh@mit.edu}

\author{Jason Zeng}
\address{CMI\\MIT\\Cambridge, MA 02139}
\email{jasonzeng124@gmail.com}

\date{August 15, 2025}

\begin{abstract}
	Let $M$ be a cancellative and commutative monoid. A non-invertible element of $M$ is called an atom (or irreducible element) if it cannot be factored into two non-invertible elements, while an atom $a$ of $M$ is called strong if $a^n$ has a unique factorization in $M$ for every $n \in \nn$. The monoid $M$ is atomic if every non-invertible element factors into finitely many atoms (repetitions allowed). For an algebraic number $\alpha$, we let $M_\alpha$ denote the additive monoid of the subsemiring $\nn_0[\alpha]$ of $\cc$. The atomic structure of $M_\alpha$ reflects intricate interactions between algebraic number theory and additive semigroup theory. For $m, n \in \nn_0 \cup \{ \infty \}$ (with $m \le n$), the pair $(m,n)$ is called realizable if there exists an algebraic number $\alpha \in \cc$ such that $M_\alpha$ has $m$ strong atoms and $n$ atoms. Our primary goal is to identify classes of realizable pairs with the long-term goal of providing a complete description of the full set of realizable pairs.
\end{abstract}

\bigskip
\maketitle


\bigskip
\section{Introduction}
\label{sec:intro}

The study of factorizations in algebraic structures such as commutative monoids, rings, and semirings has become a vibrant area of modern algebra, intersecting with combinatorics~\cite{AGL24}, number theory~\cite{GK24}, and geometry~\cite{dG22}. A particularly rich framework for these investigations is provided by additive monoids arising from commutative semidomains~\cite{CGG20}, especially the so-called simple extensions $\mathbb{N}_0[\alpha]$ of the semidomain $\nn_0$ for all $\alpha \in \aaa$, where $\aaa$ denotes the set of all algebraic numbers (see~\cite{ABLST23,CG22} and references therein). For each $\alpha \in \aaa$, we denote by $M_\alpha$ the underlying additive monoid of the semidomain $\nn_0[\alpha]$. The atomic structure of~$M_\alpha$ is tightly connected to the arithmetic and algebraic properties of the minimal polynomial of~$\alpha$, denoted~$m_\alpha(x)$ throughout this paper.

\smallskip

A nonzero element of $M_\alpha$ is called an \emph{atom} if it cannot be expressed in $M_\alpha$ as a sum of two nonzero elements. We let $\mathcal{A}(M_\alpha)$ denote the set of all atoms of~$M_\alpha$. The monoid $M_\alpha$ is said to be \emph{atomic} if every nonzero element can be written as a sum of finitely many atoms (repetitions allowed). It was shown in~\cite[Theorem~4.2]{CG22} that $M_\alpha$ is atomic if and only if~$1$ is an atom; in this case, $\mathcal{A}(M_\alpha) = \{1, \alpha, \dots, \alpha^n \}$ for some $n \in \nn_0$ when $M_\alpha$ is finitely generated while $\mathcal{A}(M_\alpha) = \{\alpha^n : n \in \nn_0\}$ when $M_\alpha$ is not finitely generated.

\smallskip

An atom $a$ in $M_\alpha$ is called a \emph{strong atom} if, for each $n \in \nn$, the only decomposition of $na$ as a finite sum of atoms in $M_\alpha$ is the obvious one, which consists of $n$ copies of $a$. The set of all strong atoms of $M_\alpha$ is denoted by $\mathcal{S}(M_\alpha)$. It follows from the definition of a strong atom that $\mathcal{S}(M_\alpha) \subseteq \mathcal{A}(M_\alpha)$.

\smallskip

A nonzero $p \in M_\alpha$ is defined to be a \emph{prime} if for all $b,c \in M_\alpha$, the condition $b+c \in p + M_\alpha$ implies that either $b \in p + M_\alpha$ or $c \in p + M_\alpha$. The set of all primes of $M_\alpha$ is denoted by $\mathcal{P}(M_\alpha)$. Clearly, every prime in $M_\alpha$ is a strong atom. It was established in~\cite{AGS25} (CrowdMath 2022) that, for each positive algebraic number $\alpha \in \aaa$, the monoid~$M_\alpha$ contains prime elements if and only if it is a unique factorization monoid (UFM). In such a case, one has
\[
    \mathcal{P}(M_\alpha) = \mathcal{S}(M_\alpha) = \mathcal{A}(M_\alpha) = \{\alpha^n : n \in \ldb 0, d-1\rdb \},
\]
where $d$ is the degree of the minimal polynomial of~$\alpha$. However, as we will see in Example~\ref{ex:one strong atom} and Theorem~\ref{thm:strong atoms for irreducible n-th roots}, the monoid~$M_\alpha$ may contain strong atoms even when it is not a UFM.

\smallskip

More generally, the notions of atoms, strong atoms, and primes can be defined for any commutative cancellative monoid~$M$ analogously to the way we have just defined these notions in~$M_\alpha$. For such a monoid~$M$, we obtain the natural inclusions
\begin{equation} \label{eq:inclusion for primes, s-atoms, atoms}
    \mathcal{P}(M) \subseteq \mathcal{S}(M) \subseteq \mathcal{A}(M),
\end{equation}
where $\mathcal{P}(M)$, $\mathcal{S}(M)$, and $\mathcal{A}(M)$ denote the sets of primes, strong atoms, and atoms of~$M$, respectively. For an integral domain~$R$, we let $R^*$ denote its multiplicative monoid (the multiplicative monoid consisting of all nonzero elements of $R$). In the recent paper~\cite{FFNSW24}, Fadinger et al. study the chain of inclusions~\eqref{eq:inclusion for primes, s-atoms, atoms} in the setting of multiplicative monoids of integral domains, providing explicit examples of atomic integral domains that exhibit each of the eight possible combinations regarding the existence or absence of primes, strong atoms that are not atoms, and atoms.

\smallskip

The first systematic study of the atomic and factorization-theoretic structure of the family of monoids~$M_\alpha$ was carried out by Correa-Morris and Gotti in~\cite{CG22}. Motivated by their investigation, we undertake a detailed analysis of the sets of atoms and the sets of strong atoms in the monoids~$M_\alpha$ with the aim of determining the possible pairs
\[
    (|\mathcal{S}(M_\alpha))|, |\mathcal{A}(M_\alpha)|)
\]
as $\alpha$ varies over $\mathbb{A}$. Since we have already observed that $M_\alpha$ is a UFM precisely when it contains primes, we focus our attention on the sets $\mathcal{A}(M_\alpha)$ and $\mathcal{S}(M_\alpha)$ and disregard $\mathcal{P}(M_\alpha)$ in our analysis. We say that a pair $(m,n) \in \nn_0^2$ is \emph{realizable} if it belongs to the set
\[
     \big\{ (|\mathcal{S}(M_\alpha)|, |\mathcal{A}(M_\alpha)|) : \alpha \in \aaa \big\}.
\]

\smallskip

In this paper, we try to understand which pairs of $\nn_0^2$ are realizable. Towards this end, we introduce and analyze the function $f \colon \mathbb{A} \to (\mathbb{N}_0 \cup \{\infty\})^2$ defined as follows:
\[
    f(\alpha) := (|\mathcal{S}(M_\alpha)|, |\mathcal{A}(M_\alpha)|)
\]
for all $\alpha \in \aaa$. The function $f$ describes the number of strong atoms and atoms of the associated monoid $M_\alpha$. We identify various realizable and non-realizable pairs, provide constructive methods to produce algebraic numbers $\alpha$ for which $f(\alpha)$ assumes prescribed values, and examine how transformations of minimal polynomials (such as the substitution $x \mapsto x^k$) affect atomic and factorization-theoretic properties. Our results shed new light on the algebraic and combinatorial behavior of the monoids~$M_\alpha$ and establish connections to classical results, including Descartes' Rule of Signs and Curtiss' Theorem.

\smallskip

This paper is organized as follows. In Section~\ref{sec:background}, we introduce the notation, terminology, and known results that we shall use throughout the paper. Then in Section~\ref{sec:pair realizable by degree-2 algebraic numbers}, we determine the pairs of $\nn_0^2$ that are realizable by the algebraic numbers of degree~$2$, namely, those algebraic numbers whose minimal polynomials have degree~$2$. Finally, in Section~\ref{sec:further realizable pairs}, we first prove that for every $(k,c) \in \nn \times \nn_0$, the pair $(4k+c, 5k+c)$ is realizable, and we then prove that the pair $(n,n+1)$ is realizable if and only if $n \ge 4$. This last result also illustrates that not every pair $(m,n) \in \nn_0^2$ with $m \le n$ is a realizable pair.

\bigskip


\section{Background}
\label{sec:background}

\subsection{General Notation}

Throughout this paper, we let $\zz$, $\qq$, $\rr$, $\aaa$, and $\cc$ stand, as usual, for the set integers, rational numbers, real numbers, algebraic complex numbers, and complex numbers, respectively. We let $\pp$, $\nn$, and $\nn_0$ denote the set of standard primes, positive integers, and nonnegative integers, respectively. 
For any $r,s \in \rr$, we let $\ldb r,s \rdb$ denote the set of integers between $r$ and $s$, that is,
\[
	\ldb r,s \rdb := \{n \in \zz : r \le n \le s\}.
\]
Observe that $\ldb r,s \rdb$ is empty for any $r,s \in \rr$ with $r \geq s$. For a subset $S$ of the real line, we set $S_{\ge r} := \{s \in S : s \ge r\}$ and $S_{> r} := \{s \in S : s > r\}$.

\medskip

\subsection{Commutative Monoids}

Although a monoid is conventionally defined to be a semigroup with an identity element, in the scope of this paper we will reserve the term \emph{monoid} to refer to a semigroup that has an identity element and is cancellative and commutative. Monoids here are written additively unless we explicitly state otherwise.
\smallskip

Let $M$ be a monoid. The set $\uu(M)$ consisting of all invertible elements of the monoids~$M$ is an abelian group that is often referred to as the \emph{group of units} of~$M$. When the group $\uu(M)$ is trivial, the monoid $M$ is called \emph{reduced}. The monoids we study in this paper are reduced. The quotient monoid $M/\uu(M)$ is called the \emph{reduced monoid} of $M$ and is denoted by $M_\text{red}$: the monoid $M_{\text{red}}$ is clearly reduced, and $M \simeq M_{\text{red}}$ if and only if $M$ is already a reduced monoid.
\smallskip

An element $a \in M \! \setminus \! \uu(M)$ is called an \emph{atom} if whenever $a = u+v$ for some $u,v \in M$, then either $u \in \uu(M)$ or $v \in \uu(M)$.  The set consisting of all the atoms of $M$ is denoted by $\mathcal{A}(M)$. Following Coykendall, Dobbs, and Mullins~\cite{CDM99}, we say that $M$ is \emph{antimatter} provided that $\mathcal{A}(M)$ is the empty set. An element of $M$ is said to be \emph{atomic} if it is invertible or can be written as a sum of finitely many atoms in $M$ (repetitions allowed). Following Cohn~\cite{pC68}, we say that the monoid~$M$ is \emph{atomic} if every element of $M$ is atomic. 
\smallskip

For the rest of this section, assume that $M$ is an atomic monoid. An atom $a \in \mathcal{A}(M)$ is called a \emph{strong atom} provided that, for every $n \in \nn$, the element $na$ can be written as a sum of finitely many atoms in only one way, the obvious one. We let $\mathcal{S}(M)$ denote the set of strong atoms of~$M$. A non-invertible element $p \in M$ is called a \emph{prime} if for any $b,c \in M$ the fact that $b+c \in p + M$ implies that $b \in p + M$ or $c \in p + M$. We let $\mathcal{P}(M)$ denote the set consisting of all primes of $M$. It is well known and easy to prove that
\[
    \mathcal{P}(M) \subseteq \mathcal{S}(M) \subseteq \mathcal{A}(M).
\]

Let $\mathsf{Z}(M)$ denote the free commutative monoid on the set of atoms $\mathcal{A}(M_{\text{red}})$, and let $\pi \colon \mathsf{Z}(M) \to M_\text{red}$ denote the only monoid homomorphism that fixes every element of the set $\mathcal{A}(M_{\text{red}})$. The elements of $\mathsf{Z}(M)$ are called \emph{factorizations}. Given a factorization $z := a_1 \cdots a_\ell \in \mathsf{Z}(M)$ for atoms $a_1, \dots, a_\ell \in \mathcal{A}(M_{\text{red}})$, we say that the \emph{length} of~$z$ is $\ell$ and we often denote the length of $z$ by~$|z|$. For each $b \in M$, we set
\[
	\mathsf{Z}_M(b) := \pi^{-1} (b + \uu(M)) \quad  \text{and}  \quad  \mathsf{L}_M(b) := \{ |z| : z \in \mathsf{Z}(b) \}.
\]
When there seems to be no risk of ambiguity, we write $\mathsf{Z}(b)$ and $\mathsf{L}(b)$ instead of $\mathsf{Z}_M(b)$ and $\mathsf{L}_M(b)$, respectively. The monoid $M$ is called a \emph{unique factorization monoid} (resp., \emph{finite factorization monoid}) if $|\mathsf{Z}(b)| = 1$ (resp., $1 \le |\mathsf{Z}(b)| < \infty$) for every $b \in M$. We will write UFM (resp., FFM) instead of unique factorization monoid (resp., finite factorization monoid). Every UFM is clearly an FFM. We say that~$M$ is a \emph{bounded factorization monoid} (BFM) if $1 \le |\mathsf{L}(b)| < \infty$ for all $b \in M$. It follows directly from the corresponding definitions that every FFM is a BFM.

\medskip

\subsection{Polynomial Rings and Semirings}

Let $R$ be an integral domain. Let $f(x)$ be a nonzero polynomial in $R[x]$ and write $f(x) = \sum_{n=0}^d c_n x^n$ for some coefficients $c_0, \dots, c_d \in R$ such that $c_d \neq 0$. Then the \emph{support} of $f(x)$ is the set
\[
	\text{supp} \, f(x) := \{k \in \ldb 0,d \rdb : c_k \neq 0\},
\]
while $\deg f(x) := d$ and $\text{ord} \, f(x) := \min \text{supp} \, f(x)$ are called the \emph{degree} and \emph{order} of $f(x)$, respectively. Descartes' Rule of Signs will be helpful later: it states that the number of variations of sign of a nonzero polynomial $f(x) \in \rr[x]$ has the same parity as and is at least the number of positive roots of $f(x)$ (counting multiplicity). The following theorem is another tool we will use later.

\begin{theorem} \cite[Section~5]{dC18} \label{thm:Curtiss' result}  
	For each nonzero polynomial $f(x) \in \rr[x]$, there exists a nonzero polynomial $\mu(x) \in \rr[x]$ such that the number of variations of sign of $\mu(x)f(x)$ equals the number of positive roots of $f(x)$, counting multiplicity.
\end{theorem}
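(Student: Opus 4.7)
The plan is to combine Descartes' rule of signs with a one-variable analog of P\'olya's positivity theorem, together with the elementary subadditivity $V(gh) \le V(g) + V(h)$ of the number $V(\cdot)$ of sign variations of a polynomial.

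I would first factor $f(x) = c \, x^{k} \, p(x) \, q(x)$ over $\rr[x]$, where $c \in \rr \setminus \{0\}$, $k$ is the order of $f$, the polynomial $p(x) = \prod_{i=1}^{r}(x - a_i)$ gathers all $r$ positive real roots $a_1, \dots, a_r > 0$ of $f$ with multiplicity, and $q(x) \in \rr[x]$ collects the remaining factors (those arising from negative real roots and from complex-conjugate pairs). By construction $q$ has no roots in $[0, \infty)$, so $q$ has constant sign on this interval; after absorbing a sign into $c$, I may assume that $q(x) > 0$ for all $x \ge 0$. Since the coefficients of $p$ alternate in sign, Descartes' rule forces $V(p) = r$.

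The heart of the proof is a one-variable form of P\'olya's positivity theorem: for some $N \in \nn$, the polynomial $(1+x)^{N} q(x)$ has only non-negative coefficients. To establish this, I would write $q(x) = \sum_{j=0}^{d} q_{j} x^{j}$ and, for $0 \le k \le N+d$, expand
\[
    c_{k}(N) := [x^{k}]\bigl((1+x)^{N} q(x)\bigr) = \sum_{j=0}^{d} q_{j} \binom{N}{k-j}.
\]
Using $\binom{N}{k-j}/\binom{N}{k} \to \bigl(t/(1-t)\bigr)^{j}$ as $N \to \infty$ with $k/N \to t$, one gets $c_{k}(N)/\binom{N}{k} \to q\bigl(t/(1-t)\bigr)$, and this limit is strictly positive for every $t \in [0, 1)$ by the hypothesis on $q$. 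A uniform lower bound on compact subsets of $[0, 1)$, combined with the explicit values $c_{0}(N) = q_{0} > 0$ and $c_{N+d}(N) = q_{d} > 0$ at the extreme indices, then shows that all coefficients $c_{k}(N)$ become non-negative once $N$ is large enough.

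With such an $N$ fixed, set $\mu(x) := (1+x)^{N}$ and $s(x) := \mu(x) q(x)$, so $V(s) = 0$. The subadditivity of $V$ gives
\[
    V(\mu f) = V(p \cdot s) \le V(p) + V(s) = r,
\]
whereas Descartes' rule applied to $\mu f$ (which has the same positive real roots as $f$ with the same multiplicities, since $\mu$ has none) yields $V(\mu f) \ge r$. Combining these two inequalities produces $V(\mu f) = r$, as required. The main obstacle I anticipate is the P\'olya step: controlling the positivity of $c_{k}(N)$ uniformly across all indices $k$. The pointwise asymptotic $c_{k}(N)/\binom{N}{k} \to q(t/(1-t))$ degenerates both near $k = 0$ and near $k = N+d$, so a careful separation into a bulk regime $k = tN$ with $t$ in a compact subset of $(0, 1)$ and an $O(1)$-sized boundary treatment using the explicit binomial sums is needed to complete the uniform estimate.
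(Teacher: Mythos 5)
The paper does not actually prove this statement—it imports it from Curtiss' survey \cite[Section~5]{dC18}—so your argument can only be judged on its own terms, and as written it has a fatal gap at the very last step. The ``elementary subadditivity'' $V(gh)\le V(g)+V(h)$ of sign variations is false, and it fails exactly in the configuration you need (an alternating factor times a factor with nonnegative coefficients). Take $p(x)=x-1$ and $s(x)=x^{2}+\tfrac12 x+1$: here $V(p)=1$, $V(s)=0$, but $p(x)s(x)=x^{3}-\tfrac12 x^{2}+\tfrac12 x-1$ has $V=3$. This is precisely your situation with $q(x)=x^{2}+\tfrac12x+1$ (positive on $[0,\infty)$, so your criterion for choosing $\mu$ is already met with $N=0$, $\mu=1$), and your chain of inequalities would ``prove'' that $V\bigl((x-1)(x^{2}+\tfrac12x+1)\bigr)=1$. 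So arranging that $\mu(x)q(x)$ has nonnegative coefficients does \emph{not} bound $V(\mu f)$ by $r$; the Descartes lower bound $V(\mu f)\ge r$ and the Poincar\'e--P\'olya positivity step are fine, but the upper bound is unproven. The multiplier has to be chosen relative to the whole polynomial $f$, not just to its positive-rootless part: the classical route (the one surveyed by Curtiss) is to show that for all sufficiently large $N$ the polynomial $(1+x)^{N}f(x)$ has exactly as many sign variations as $f$ has positive roots with multiplicity, and proving that requires controlling the interaction between the binomial smoothing and the factors $(x-a_i)$—for instance via log-concavity/total-positivity properties of the coefficients of $(1+x)^{N}q(x)$ and a lemma that multiplying a polynomial with positive, suitably log-concave coefficients by $(x-a)$, $a>0$, produces exactly one sign change, iterated over $a_1,\dots,a_r$. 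None of this is delivered by a generic subadditivity shortcut.

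Two smaller points. First, in your P\'olya step the problematic range is not an ``$O(1)$-sized boundary'': the degenerate regime is $k\ge (1-\delta)N$, of size proportional to $N$; it is handled cleanly by applying the bulk estimate to the reciprocal polynomial $x^{d}q(1/x)$, which is again positive on $[0,\infty)$, since reversing the coefficient order of $(1+x)^{N}q(x)$ gives $(1+x)^{N}\,x^{d}q(1/x)$. Second, $V(p)=r$ needs no appeal to Descartes: the coefficients of $\prod_{i=1}^{r}(x-a_i)$ with all $a_i>0$ are $(-1)^{j}e_{j}(a_1,\dots,a_r)$, which strictly alternate. But the essential defect is the false subadditivity; without replacing it by a genuine multiplier argument adapted to all of $f$, the proof does not establish the theorem.
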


Now assume that $f(x) = \sum_{n=0}^d c_n x^n \in \qq[x]$ is a nonzero polynomial with $\deg f(x) = d$. Then there exists a unique $r \in \qq_{> 0}$ such that $r f(x)$ is a polynomial in $\zz[x]$ with content~$1$ (i.e., the greatest common divisor of the set of coefficients of $rf(x)$ is~$1$). Then there exists a unique pair of polynomials $(p(x),q(x)) \in \nn_0[x]$ such that $rf(x) = p(x) - q(x)$, called the minimal pair of $f(x)$. Let $\alpha$ be a nonzero algebraic number. We often denote the minimal polynomial of $\alpha$ by $m_\alpha(x) \in \qq[x]$. We refer to the minimal pair of $m_\alpha(x)$ also as the \emph{minimal pair} of~$\alpha$. Recall that the \emph{degree} of $\alpha$ is $\deg m_\alpha(x)$ while the \emph{conjugates} of~$\alpha$ are the roots of $m_\alpha(x)$.

\medskip

\subsection{The Additive Monoid of the Semidomain $\nn_0[\alpha]$}

For any nonempty set $S$ and binary operations `$+$' and `$\cdot$' on $S$, we say that the triple $(S,+,\cdot)$ is a \emph{semidomain} if the following conditions hold:
\begin{itemize}
    \item $(S,+)$ is a monoid,
    \smallskip
    
    \item $(S \setminus \{0\}, \cdot)$ is a monoid, and
    \smallskip

    \item the distributive law holds: $r(s+t) = rs + rt$ for all $r,s,t \in S$.
\end{itemize}

Let $(S,+ ,\cdot)$ be a semidomain, which we denote simply by~$S$. The identity element~$0$ (resp.,~$1$) of $(S,+)$ (resp., $(S \setminus \{0\},\cdot)$) is called the \emph{zero element} (resp., \emph{identity element}) of~$S$. A submonoid $S'$ of $S$ is called a \emph{sub-semidomain} of~$S$ provided that $1 \in S'$ and $S'$ is closed under multiplication. It is well known that any semidomain can be naturally extended to an integral domain (i.e., a commutative ring with identity and without nonzero zero-divisors). 
As for polynomial domains, we let $S[x]$ denote the semidomain consisting of all polynomials over~$S$. Then, for any $\tau \in \cc$, it follows that
\[
    \nn_0[\tau] := \big\{ p(\tau) : p(x) \in \nn_0[x] \big\}.
\]
is a semidomain, which we call the \emph{cyclic semidomain} generated by $\tau$. When $\tau$ is transcendental, $\nn_0[\tau]$ is isomorphic to the polynomial semiring $\nn_0[x]$ and so its additive monoid is isomorphic to the free commutative monoid $\bigoplus_{n \in \nn_0} \nn_0$. As every free commutative monoid is a UFM, the additive monoids of cyclic semidomains generated by transcendental numbers are rather trivial from the viewpoints of atomicity and factorizations. Thus, without loss of generality, we can restrict our attention to cyclic semidomains $\nn_0[\alpha]$ parameterized by algebraic numbers $\alpha \in \mathbb{A}$. As the underlying additive monoids of the semidomains $\nn_0[\alpha]$ (for any $\alpha \in \aaa$) are the central algebraic objects we study in this paper, in order to distinguish the former from the later, it is convenient to introduce the following notation. 
\smallskip

\noindent \textbf{Notation.} For any $\alpha \in \aaa$, we let $M_\alpha$ denote the underlying additive monoid of the cyclic semidomain~$\nn_0[\alpha]$.
\smallskip

For both self-containment and future reference, we conclude this section mentioning some known results on the atomicity and factorization of the monoids~$M_\alpha$ (for any $\alpha \in \aaa$). We start with a characterization of atomicity.

\begin{theorem}\label{thm:atomicity of M alpha}\cite[Theorem~4.2]{CG22} 
    For $\alpha \in \aaa$, the additive monoid $M_\alpha$ is atomic if and only if $1 \in \mathcal{A}(M_\alpha)$, in which case, there exists $\sigma \in \nn_0 \cup \{ \infty \}$ such that
    \begin{equation}
        \mathcal{A}(M_\alpha) = \{\alpha^n : n \in [0, \sigma)  \cap \nn_0 \}.
    \end{equation}
    Moreover, $\sigma = \min \{ n \in \nn : \alpha^n \in \{\alpha^j : j \in \ldb 0, n-1 \rdb \} \}$ if  $M_\alpha$ is finitely generated (and so atomic).
\end{theorem}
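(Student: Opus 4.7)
The plan is to characterize $\mathcal{A}(M_\alpha)$ as a downward-closed initial segment $\{\alpha^n : 0 \le n < \sigma\}$ of powers of $\alpha$, and then read off the biconditional on atomicity. Throughout I assume $\alpha \ne 0$, since the case $\alpha = 0$ gives $M_\alpha = \nn_0$, trivially atomic with $\sigma = 1$.

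First I would show that $\mathcal{A}(M_\alpha) \subseteq \{\alpha^n : n \in \nn_0\}$. Any $a \in M_\alpha$ admits a representation $a = \sum_j c_j \alpha^j$ with $c_j \in \nn_0$. If this sum has two or more nonzero terms, or a single term with coefficient at least $2$, then splitting the sum yields $a = u + v$ with $u, v \in M_\alpha \setminus \{0\}$ (the nonzero-ness of both summands uses $\alpha \ne 0$ together with $\cc$ having no zero divisors). Hence an atom must take the form $\alpha^n$. Next I would establish the downward-closure property: if $\alpha^n$ is not an atom, with decomposition $\alpha^n = u + v$ for some $u, v \in M_\alpha \setminus \{0\}$, then multiplying by $\alpha$ gives $\alpha^{n+1} = (\alpha u) + (\alpha v)$ as a nontrivial decomposition (again using no zero divisors). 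Therefore $\mathcal{A}(M_\alpha) = \{\alpha^n : 0 \le n < \sigma\}$ for a unique $\sigma \in \nn_0 \cup \{\infty\}$.

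For the biconditional, one direction follows immediately by contrapositive: if $1 \notin \mathcal{A}(M_\alpha)$, then $1 = u + v$ for some nonzero $u, v \in M_\alpha$, and multiplying by $\alpha^n$ gives $\alpha^n = (\alpha^n u) + (\alpha^n v)$ with both summands nonzero, so no $\alpha^n$ is an atom. Combined with the first step, $\mathcal{A}(M_\alpha) = \emptyset$, making $M_\alpha$ non-atomic. Conversely, if $1 \in \mathcal{A}(M_\alpha)$, I would verify atomicity by showing every nonzero element of $M_\alpha$ is a sum of atoms: for $n < \sigma$, $\alpha^n$ is already an atom by the downward-closure argument, and for $n \ge \sigma$, $\alpha^n$ is a sum of lower atoms via iterative application of a relation of the form $\alpha^\sigma = \sum_{j < \sigma} e_j \alpha^j$ with $e_j \in \nn_0$.

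The characterization of $\sigma$ in the finitely generated case then follows essentially by definition: $\sigma$ is the smallest $n \in \nn$ for which $\alpha^n$ lies in the additive submonoid generated by $\{\alpha^j : 0 \le j < n\}$. The main obstacle I foresee is justifying the existence of the reduction relation $\alpha^\sigma = \sum_{j < \sigma} e_j \alpha^j$ when $M_\alpha$ is atomic but not finitely generated: a priori, a nontrivial representation of $\alpha^\sigma$ could involve powers higher than $\sigma$, and one needs to show such a representation can always be refined to use only powers below $\sigma$. This rests on a structural analysis of the ideal $(m_\alpha(x)) \subseteq \qq[x]$ of polynomials vanishing at $\alpha$, combined with a combinatorial argument that subtracts appropriate $\nn_0$-coefficient multiples of elements of this ideal to eliminate high-degree terms.
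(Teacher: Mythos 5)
First, note that the paper does not prove this statement at all: it is quoted verbatim from \cite[Theorem~4.2]{CG22}, so there is no internal proof to compare yours against; I am judging your argument on its own.

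The easy parts of your plan are essentially right (atoms must be powers $\alpha^n$; non-atomicity propagates upward under multiplication by $\alpha$; if $1$ is not an atom then no power is, so $M_\alpha$ is not atomic), but the converse direction contains a genuine gap that you yourself flag and then do not close, and it is precisely the mathematical heart of the theorem. If $1 \in \mathcal{A}(M_\alpha)$ and $\sigma < \infty$, all you get from ``$\alpha^\sigma$ is not an atom'' is a relation $\alpha^\sigma = \sum_j c_j \alpha^j$ with $c_j \in \nn_0$ and $\sum_j c_j \ge 2$, in which exponents $j > \sigma$ may occur. Atomicity of $M_\alpha$ is \emph{equivalent} to upgrading this to a relation supported on exponents $j < \sigma$ (since any sum of atoms is an $\nn_0$-combination of $1, \alpha, \dots, \alpha^{\sigma-1}$), so the ``reduction relation'' you defer is not a technical refinement but the entire content of the nontrivial implication. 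Your proposed remedy --- subtracting $\nn_0$-coefficient multiples of elements of $(m_\alpha(x))$ to kill the high-degree terms --- is exactly the kind of sign-pattern problem for multiples of $m_\alpha(x)$ that the rest of this paper has to fight with Descartes' rule and ad hoc coefficient chases (compare Lemma~\ref{lem:strong atom as sum of lower degree terms}, whose proof moreover already assumes finite generation and only produces $\qq_{\ge 0}$-coefficients); you give no argument that such a reduction exists or that any iterative substitution terminates, and naive substitution of the relation into itself only raises degrees. Without this step the biconditional is unproven.

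Two smaller points. In your first step, ``both summands are nonzero by the absence of zero divisors'' is not enough: for negative or complex $\alpha$ a subsum of nonzero monomials can vanish (e.g.\ $1+\alpha^2=0$ for $\alpha=i$), so you must choose the split (using characteristic $0$, some split of a representation with coefficient sum $\ge 2$ has both parts nonzero), or first record the useful lemma that $1 \in \mathcal{A}(M_\alpha)$ forces $\alpha$ not to be a root of any nonzero polynomial in $\nn_0[x]$. Finally, your reading of the ``moreover'' clause as membership in the submonoid generated by $\{\alpha^j : j \in \ldb 0, n-1 \rdb\}$ (rather than the literal set) is the correct intended one, and given atomicity it does follow as you say; but it, too, rests on the unproven converse direction.
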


Here we have a sufficient condition for the atomicity of the studied monoids.

\begin{proposition}\label{prop:sufficient condition for atomicity}\cite[Proposition~4.5]{AGS25}
    Let $\alpha$ be a positive algebraic number with minimal polynomial polynomial $m_\alpha(X)$. The monoid $M_\alpha$ is atomic with infinitely many atoms if $m_{\alpha}(x)$ has a positive root different from~$\alpha$.
\end{proposition}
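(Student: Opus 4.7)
The plan is to prove both halves of the proposition via Theorem~\ref{thm:atomicity of M alpha} combined with Descartes' Rule of Signs, exploiting the presence of a second positive root $\beta \ne \alpha$ of $m_\alpha(x)$. The unifying observation is that any $f(x) \in \qq[x]$ vanishing at $\alpha$ is divisible by $m_\alpha(x)$ and hence also vanishes at $\beta$, so any candidate ``bad'' decomposition in $M_\alpha$ will produce such an $f(x)$ with two distinct positive roots, while its coefficient sign pattern, constrained by the nonnegativity inherited from $\nn_0[x]$, will admit at most one.

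To show $1 \in \mathcal{A}(M_\alpha)$, we suppose toward a contradiction that $1 = p(\alpha) + q(\alpha)$ for some nonzero $p, q \in \nn_0[x]$ and set $s(x) := p(x) + q(x) \in \nn_0[x]$. Then $s(x) - 1$ is nonzero (the alternative $s(x) \equiv 1$ is impossible with both $p, q$ nonzero in $\nn_0[x]$, since they would reduce to nonnegative integer constants summing to $1$) and is divisible by $m_\alpha(x)$, so it vanishes at both $\alpha$ and $\beta$. We then split on $s(0)$: if $s(0) \ge 1$, every coefficient of $s(x) - 1$ is nonnegative with at least one positive, forcing $s(x) - 1 > 0$ on $(0,\infty)$ and contradicting $s(\alpha) = 1$; if $s(0) = 0$, the coefficient sequence of $s(x) - 1$ reads $(-1, s_1, \dots, s_d)$ with $s_i \ge 0$ and $s_d > 0$, exhibiting exactly one sign variation, so Descartes' Rule bounds the positive roots by~$1$ and again contradicts the presence of the two distinct positive roots $\alpha$ and $\beta$. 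Theorem~\ref{thm:atomicity of M alpha} then yields the atomicity of $M_\alpha$.

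For the infinitude of atoms, we argue by contradiction, assuming that $\mathcal{A}(M_\alpha)$ is finite. By the already-established atomicity and Theorem~\ref{thm:atomicity of M alpha}, $\mathcal{A}(M_\alpha) = \{\alpha^j : j \in \ldb 0, n-1 \rdb\}$ for some $n \in \nn$, and atomicity supplies $c_0, \dots, c_{n-1} \in \nn_0$ with $\alpha^n = \sum_{j=0}^{n-1} c_j \alpha^j$. Then $f(x) := x^n - \sum_{j=0}^{n-1} c_j x^j$ vanishes at $\alpha$, hence is divisible by $m_\alpha(x)$ and vanishes at $\beta$ as well. Since $\alpha > 0$ rules out $f(x) = x^n$, at least one $c_j$ is positive, so the coefficient sequence of $f(x)$ exhibits exactly one sign variation (a single $+$ leading coefficient followed by nonpositive lower-order terms). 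Descartes' Rule now allows at most one positive root, yet $\alpha \ne \beta$ are two positive roots of $f(x)$, a contradiction.

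The main obstacle we anticipate is the careful case analysis in Step~1: ruling out the degenerate situation $s(x) \equiv 1$ and verifying that both the ``all coefficients nonnegative'' and the ``exactly one sign variation'' cases deliver their contradictions cleanly. Once that bookkeeping is in place, Step~2 is essentially a second application of the same Descartes-type argument, so the entire proof rests on packaging the hypothesis ``$m_\alpha(x)$ has a second positive root'' into a forbidden polynomial whose sign pattern fails Descartes' inequality.
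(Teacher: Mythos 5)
Your argument is correct, but there is nothing in the paper to compare it against: the paper does not prove this proposition at all, it simply imports it as \cite[Proposition~4.5]{AGS25}. Taken on its own terms, your proof is sound and self-contained. Writing a hypothetical nontrivial decomposition of $1$ (resp.\ of $\alpha^n$ once finitely many atoms are assumed) as a polynomial identity, noting that any rational polynomial vanishing at $\alpha$ is divisible by $m_\alpha(x)$ and hence also vanishes at the second positive root $\beta$, and then killing the candidate polynomial either by positivity on $(0,\infty)$ or by Descartes' Rule of Signs (one sign variation versus two distinct positive roots) is exactly the style of reasoning this paper uses elsewhere, e.g.\ in Example~\ref{ex:one strong atom}, in part~(3) of Proposition~\ref{prop:deg-2 minimal polynomial}, and in the three-positive-conjugates proposition; your appeal to Theorem~\ref{thm:atomicity of M alpha} to pass from ``$1$ is an atom'' to atomicity and to the shape $\mathcal{A}(M_\alpha)=\{\alpha^j : j \in \ldb 0,n-1\rdb\}$ is also the intended mechanism. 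Two small points worth making explicit: the hypothesis forces $\deg m_\alpha(x)\ge 2$, so $\alpha\notin\qq$ and the powers $\alpha^j$ are pairwise distinct (which is what lets you identify ``finitely many atoms'' with a finite $\sigma$ in Theorem~\ref{thm:atomicity of M alpha} and conclude $\alpha^n$ is not itself an atom); and in the second step the case ``all $c_j=0$'' is excluded because $\alpha>0$, as you note. With those remarks your write-up is complete.
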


We conclude with a characterization of the monoids $M_\alpha$ that are UFMs.

\begin{theorem}\cite[Theorem~5.4]{CG22}  \label{thm:factoriality characterization}
    Let $\alpha$ be an algebraic number with degree $d$, minimal polynomial $m_\alpha(x) \in \qq[x]$, and minimal pair $(p(x), q(x)) \in \nn_0[x]^2$. Then the following conditions are equivalent.
        \begin{enumerate}
            \item[(a)] $M_\alpha$ is a UFM.
            \smallskip
            
            \item[(b)] $\deg m_\alpha(x) = |\mathcal{A}(M_\alpha)|$.
            \smallskip
            
            \item[(c)] $p(x) = x^d$ for some $d \in \N$.
        \end{enumerate}
        Moreover, if $M_\alpha$ is a UFM, then it is finitely generated.
\end{theorem}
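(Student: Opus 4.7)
My plan is to prove the cycle of implications $\text{(c)} \Rightarrow \text{(a)} \Rightarrow \text{(b)} \Rightarrow \text{(c)}$ and then deduce the moreover clause. Throughout, I would use that the minimal pair $(p(x), q(x))$ satisfies $r\, m_\alpha(x) = p(x) - q(x)$ for some $r \in \qq_{>0}$, with $p$ and $q$ having disjoint supports, so $\max\{\deg p, \deg q\} = d$ and $p(\alpha) = q(\alpha)$ in $\cc$.

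To prove $\text{(c)} \Rightarrow \text{(a)}$, I would assume $p(x) = x^d$. Since $p$ has leading coefficient $1$, so does $r\, m_\alpha(x)$, which forces $r = 1$ and $m_\alpha(x) = x^d - q(x)$ with $q(x) \in \nn_0[x]$ of degree strictly less than $d$. The relation $\alpha^d = q(\alpha)$ then lets me express every power $\alpha^n$ with $n \ge d$ as an $\nn_0$-linear combination of $\{1, \alpha, \dots, \alpha^{d-1}\}$ by induction, giving $M_\alpha = \sum_{i=0}^{d-1} \nn_0 \alpha^i$. The $\qq$-linear independence of $1, \alpha, \dots, \alpha^{d-1}$ (forced by $\deg m_\alpha = d$) would then guarantee uniqueness of these $\nn_0$-combinations, so $M_\alpha$ is isomorphic to the free commutative monoid $\nn_0^d$ and hence a UFM whose atom set is $\{1, \alpha, \dots, \alpha^{d-1}\}$.

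For $\text{(a)} \Rightarrow \text{(b)}$, I would use that a UFM is atomic and that its atoms are $\qq$-linearly independent: any nontrivial $\qq$-linear relation among atoms $a_1, \dots, a_k$, after clearing denominators and separating terms by sign, becomes an equation $\sum_{i \in I} n_i a_i = \sum_{j \in J} m_j a_j$ with $I \cap J = \emptyset$ and not all coefficients zero, furnishing two distinct factorizations of the same element. Since atoms lie in the $d$-dimensional $\qq$-space $\qq[\alpha]$, this gives $|\mathcal{A}(M_\alpha)| \le d$. Conversely, the atoms generate $M_\alpha$ additively, so their $\qq$-span contains $\{\alpha^n : n \in \nn_0\}$ and hence equals $\qq[\alpha]$, forcing $|\mathcal{A}(M_\alpha)| \ge d$.

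The main obstacle should be $\text{(b)} \Rightarrow \text{(c)}$. First I would show every atom of $M_\alpha$ is a power of $\alpha$: if $x \in \mathcal{A}(M_\alpha)$ and $x = \sum c_i \alpha^i$ with some $c_i \ge 1$, then $x = \alpha^i + (x - \alpha^i)$ is a valid decomposition in $M_\alpha$, and since $x$ is an atom and $\alpha^i \ne 0$, this forces $x = \alpha^i$. Second, I would show that $1$ must be an atom whenever $\mathcal{A}(M_\alpha)$ is nonempty: if $1 = u + v$ for nonzero $u, v \in M_\alpha$, then multiplying by any $\alpha^n$ (legal because $M_\alpha$ is the additive monoid of a semiring inside $\cc$, which has no zero divisors) would decompose $\alpha^n$ nontrivially, so no power of $\alpha$ could be an atom. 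Since $d \ge 1$, hypothesis (b) guarantees an atom exists, whence $1 \in \mathcal{A}(M_\alpha)$, and Theorem~\ref{thm:atomicity of M alpha} would identify $\mathcal{A}(M_\alpha) = \{\alpha^n : n \in \ldb 0, \sigma - 1 \rdb\}$ for some $\sigma$; the count $|\mathcal{A}(M_\alpha)| = d$ would force $\sigma = d$. The theorem's description of $\sigma$ then yields $\alpha^d = \sum_{j=0}^{d-1} c_j \alpha^j$ for some $c_j \in \nn_0$, so the monic degree-$d$ polynomial $x^d - \sum c_j x^j$ must equal $m_\alpha(x)$, and the minimal pair of $\alpha$ is $(x^d, \sum c_j x^j)$, as desired. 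The moreover clause would follow at once from (b): $|\mathcal{A}(M_\alpha)| = d < \infty$, so $M_\alpha$ is finitely generated by its atoms.
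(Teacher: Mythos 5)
First, a point of comparison: the paper itself gives no proof of this statement --- it is imported verbatim from \cite{CG22} (Theorem~5.4 there) as background --- so there is no in-paper argument to measure yours against; I can only assess the proposal on its own terms. Your cycle $\text{(c)}\Rightarrow\text{(a)}\Rightarrow\text{(b)}\Rightarrow\text{(c)}$ is the natural route, and two of the three legs are essentially complete: for $\text{(c)}\Rightarrow\text{(a)}$, $p(x)=x^d$ does force $r=1$ and $m_\alpha(x)=x^d-q(x)$ with $q(x)\in\nn_0[x]$ of degree less than $d$, and the rewriting induction together with the $\qq$-linear independence of $1,\alpha,\dots,\alpha^{d-1}$ gives $M_\alpha\simeq\nn_0^d$; for $\text{(b)}\Rightarrow\text{(c)}$, your two observations (every atom of $M_\alpha$ is a power of $\alpha$; if $1$ is not an atom then no power of $\alpha$ is) are correct under the paper's working definition of an atom of $M_\alpha$ as a nonzero element that is not a sum of two nonzero elements, and combining them with Theorem~\ref{thm:atomicity of M alpha} and $|\mathcal{A}(M_\alpha)|=d<\infty$ does yield $\alpha^d=\sum_{j=0}^{d-1}c_j\alpha^j$ with $c_j\in\nn_0$, hence $m_\alpha(x)=x^d-\sum_j c_jx^j$ and minimal pair $\big(x^d,\sum_j c_jx^j\big)$. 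The moreover clause follows as you say.

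The genuine gap is in $\text{(a)}\Rightarrow\text{(b)}$, where you silently assume $M_\alpha$ is reduced. Your lower bound $|\mathcal{A}(M_\alpha)|\ge d$ rests on ``the atoms generate $M_\alpha$ additively,'' and your independence argument needs a nonempty sum of atoms to be nonzero and needs the two factorizations you produce to be genuinely distinct in $\mathsf{Z}(M_\alpha)$, which is built on atoms of $(M_\alpha)_{\text{red}}$ and identifies factorizations up to units. All of this is automatic when $\mathcal{U}(M_\alpha)=\{0\}$ (in particular whenever $\alpha$ is a positive real), but the statement allows an arbitrary algebraic $\alpha$, and then $M_\alpha$ need not be reduced: for $\alpha$ a negative rational, or a root of unity different from $1$, the monoid $\nn_0[\alpha]$ is a group, every element is invertible, $\mathcal{A}(M_\alpha)=\emptyset$, and under the paper's literal definitions such a group is vacuously a UFM (every element admits only the empty factorization), while $|\mathcal{A}(M_\alpha)|=0\neq\deg m_\alpha(x)$. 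So in the non-reduced case the implication cannot be argued the way you do; a complete proof must first dispose of that case --- e.g.\ show that a UFM $M_\alpha$ in the intended sense of \cite{CG22} is necessarily reduced with nonempty atom set, or record the convention that excludes these degenerate $\alpha$. The same caveat touches your step ``$x$ is an atom and $\alpha^i\neq 0$ forces $x=\alpha^i$,'' which uses the nonzero-summand definition of atom rather than the non-unit one; again harmless only when the sole unit is $0$. For positive $\alpha$, which is the situation the present paper actually exploits, your argument is complete and is surely close to the proof in \cite{CG22}.
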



\bigskip
\section{Realization of the Sizes of the Sets of Atoms and Strong Atoms}

\medskip

\subsection{Atoms and Strong Atoms} 

Fix an algebraic number $\alpha$. It was proved in~\cite{AGS25} (CrowdMath 2022) that we can take $m \in \nn_0 \cup \{\infty\}$ such that the set of strong atoms of $M_\alpha$ is
\begin{equation}
    \mathcal{S}(M_\alpha) = \big\{ \alpha^k : k \in [0,m) \cap \nn_0 \big\}.
\end{equation}
Thus, when $M_\alpha$ has infinitely many strong atoms, $\mathcal{S}(M_\alpha) = \mathcal{A}(M_\alpha)$. This is not necessarily the case when $M_\alpha$ has only finitely many strong atoms. The following simple example (also worked out in CrowdMath 2022) shows a monoid~$M_\alpha$ with infinitely many atoms and only one strong atom.

\begin{example} \label{ex:one strong atom}
    Consider the polynomial $m(x) := x^2 - 3x + 1 \in \zz[x]$. Observe that $m(x)$ is an irreducible polynomial over $\qq$ such that $m(0) = 1$ and $m(1) = -1$. Thus, $m(x)$ has two positive roots: one root $\alpha \in (0,1)$ and another root $\beta > 1$. By \Cref{prop:sufficient condition for atomicity}, $M_\alpha$ is atomic with infinitely many atoms, so $\mathcal{A}(M_\alpha) = \{\alpha^n : n \in \nn_0\}$. Since $3 \alpha = 1 + \alpha^2$, it follows that $\alpha$ is not a strong atom, and so the inclusion $\alpha M_\alpha \subseteq M_\alpha$ ensures that $\alpha^n$ cannot be a strong atom for any $n \ge 1$. On the other hand, we claim that $1$ is a strong atom. For some $n \in \nn$ and $c_1, \dots, c_N \in \nn_0$, set $g(x) := \big( \sum_{i=1}^N c_i x^i \big) - n$, and observe that $g(x)$ has only one variation of sign, so by Descartes' Rule of Signs, $g(x)$ has one positive real root. However, if $g(\alpha)=0$ such that $n \cdot 1 = \sum_{i=1}^N c_i \alpha^i$, we would need $g(x)$ to be divisible by $m(x)$ and have at least two distinct positive real roots. Thus, $\mathcal{S}(M_\alpha) = \{1\}$.
\end{example}

As our primary purpose is to determine the pairs $(m,n) \in (\nn_0^2 \cup \{\infty\})^2$ for which there exists $\alpha \in \aaa$ such that the monoid $M_\alpha$ has exactly~$m$ strong atoms and~$n$ atoms, it is convenient to formally introduce the following function:
\begin{equation} \label{eq:realization function f}
    f \colon \aaa \to \big\{ (m,n) \in (\nni)^2 : m \le n \big\},
\end{equation}
defined as
\[
    f(\alpha) := \big( |\mathcal{S}(M_\alpha)|, |\mathcal{A}(M_\alpha)|\big)
\]
for all $\alpha \in \aaa$. We are mostly interested here in determining the image of the function~$f$. Thus, let us introduce the following terminology.

\begin{definition}
	We call the function $f$ in~\eqref{eq:realization function f} the \emph{realization function} and, for any $m,n \in \nn_0 \cup \{\infty\}$, we call the pair $(m,n)$ a \emph{realizable pair} if $(m,n) \in f(\aaa)$.
\end{definition}

Observe that in light of Example~\ref{ex:one strong atom}, there exists $\alpha \in \aaa$ such that $f(\alpha) = (1,\infty)$. As a result of the main theorem of the next section, we obtain infinitely many $\alpha \in \aaa$ such that $f(\alpha) = (0,\infty)$.

The following lemma will be helpful later in characterizing the number of strong atoms of the monoid~$M_\alpha$.

\begin{lemma} \label{lem:strong atom as sum of lower degree terms}
    Let $\alpha$ be an algebraic number with minimal polynomial $m_\alpha(x)$. If~$M_\alpha$ is finitely generated and $1 \le |\mathcal{S}(M_\alpha)| < |\mathcal{A}(M_\alpha)|$, then there exists a polynomial $f(x) \in \qq[x]$ of degree $|\mathcal{S}(M_\alpha)| - \deg m_\alpha(x)$ so that $f(x)m_\alpha(x)$ has negative leading coefficient and all other coefficients nonnegative.
\end{lemma}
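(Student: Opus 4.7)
The plan is to reduce the conclusion to the existence of a positive integer multiple of $\alpha^s$ lying in the submonoid generated by $1, \alpha, \ldots, \alpha^{s-1}$, and then to establish this by downward induction on the exponent. Write $s := |\mathcal{S}(M_\alpha)|$, $a := |\mathcal{A}(M_\alpha)|$, and $d := \deg m_\alpha(x)$. Since $s \ge 1$ forces $1 \in \mathcal{A}(M_\alpha)$, the monoid $M_\alpha$ is atomic, and combining \Cref{thm:atomicity of M alpha} with the characterization of $\mathcal{S}(M_\alpha)$ recalled in~(3.1) gives $\mathcal{A}(M_\alpha) = \{1, \alpha, \ldots, \alpha^{a-1}\}$ and $\mathcal{S}(M_\alpha) = \{1, \alpha, \ldots, \alpha^{s-1}\}$. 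Once we exhibit $N \in \nn$ and $c_0, \ldots, c_{s-1} \in \nn_0$ with $N\alpha^s = \sum_{i=0}^{s-1} c_i \alpha^i$, the polynomial $g(x) := \sum_{i=0}^{s-1} c_i x^i - N x^s$ will have degree exactly $s$ (leading coefficient $-N$), vanish at $\alpha$, and satisfy the desired sign pattern. The divisibility $m_\alpha(x) \mid g(x)$ in $\qq[x]$ then produces $f(x) := g(x)/m_\alpha(x) \in \qq[x]$ of degree $s - d$, and the lemma follows.

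I establish such an $N$ via downward induction on $k$ from $a-1$ down to $s$, proving the claim $(\ast)_k$: there exist $N_k \in \nn$ and $e_{k,0}, \ldots, e_{k,k-1} \in \nn_0$ with $N_k \alpha^k = \sum_{i=0}^{k-1} e_{k,i} \alpha^i$. (Throughout I work with $\alpha$ a positive real algebraic number, consistent with the setting of \Cref{prop:sufficient condition for atomicity} and \Cref{ex:one strong atom}.) For the base case $k = a-1$: since $\alpha^{a-1}$ is not a strong atom, some multiple admits a nontrivial factorization $n\alpha^{a-1} = \sum_{i=0}^{a-1} c_i \alpha^i$; positivity of $\alpha$ forces $c_{a-1} < n$, and rearranging gives the required relation. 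For the inductive step at index $k$ (with $s \le k < a-1$), iterating the hypotheses $(\ast)_j$ for $j > k$ expresses each $\alpha^j$ with $j > k$ as a nonnegative rational combination of $1, \alpha, \ldots, \alpha^k$. Select a nontrivial factorization $n\alpha^k = \sum_i c_i \alpha^i$, substitute the higher-degree atoms using these expressions, and rearrange to obtain $(n - c_k - R_k)\alpha^k = \sum_{i<k}(c_i + R_i)\alpha^i$ with $R_i \in \qq_{\ge 0}$; as long as the right-hand side is nonzero, positivity of $\alpha$ forces $n - c_k - R_k > 0$, and clearing denominators yields $(\ast)_k$.

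The main obstacle is the subcase in which the chosen nontrivial factorization has $c_i = 0$ for every $i < k$ and the substitutions contribute no lower-order terms, so that the right-hand side of the identity above vanishes. In this situation the original factorization collapses to $(n - c_k)\alpha^k = \sum_{j \ge 1} c_{k+j}\alpha^{k+j}$; dividing by $\alpha^k$ yields $n - c_k = \sum_{j \ge 1} c_{k+j}\alpha^j$, where $n - c_k \in \nn$ and at least one $c_{k+j}$ is positive. Reading this as $(n - c_k) \cdot 1 = \sum_{j \ge 1} c_{k+j}\alpha^j$ exhibits a nontrivial factorization of the element $(n - c_k)\cdot 1 \in M_\alpha$ distinct from the trivial one, contradicting $1 \in \mathcal{S}(M_\alpha)$ (which holds because $s \ge 1$). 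This contradiction rules out the troublesome subcase and closes the induction, completing the proof.
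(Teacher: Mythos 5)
Aside from the point below, your argument is essentially the paper's: you reduce the lemma to writing a positive multiple of $\alpha^{s}$, $s=|\mathcal{S}(M_\alpha)|$, as an $\nn_0$-combination of $1,\alpha,\dots,\alpha^{s-1}$, and you obtain this by downward induction from $\alpha^{|\mathcal{A}(M_\alpha)|-1}$, using at each stage that the relevant power is an atom but not a strong atom and substituting the expressions already obtained for higher powers. Your treatment of the degenerate subcase (all lower-order coefficients vanish) by exhibiting a nontrivial factorization of a multiple of $1$ and contradicting $1\in\mathcal{S}(M_\alpha)$ is, if anything, cleaner than the corresponding step in the paper, which leans on a loosely justified claim that each substituted expression has at least two nonzero coefficients.

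The one genuine gap is the blanket assumption that $\alpha$ is a positive real number. The lemma is stated for an arbitrary algebraic number and is used in that generality (in the proof of Proposition~\ref{prop:m strong atoms and m+1 atoms} it is applied to any $\alpha$ with three strong atoms and four atoms), and its hypotheses are genuinely attained by non-real numbers: the two non-real roots $\beta$ of $x^3-8x^2+4x-2$ from the proof of Theorem~\ref{thm:4k_5k_realize} satisfy $M_\beta\simeq M_\alpha$, since $p(\beta)=q(\beta)$ exactly when $m_\beta(x)$ divides $p(x)-q(x)$, and hence have four strong atoms and five atoms. For such $\beta$, every ordering step in your induction---the inference $c_{a-1}<n$ in the base case, the inequality $n-c_k-R_k>0$, and the claim that a vanishing nonnegative combination must have all coefficients zero---is unavailable as written. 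The gap can be closed, but the reduction must be stated: because $M_\alpha$ depends only on $m_\alpha(x)$, conjugate numbers generate isomorphic monoids, so it suffices to know that under the hypotheses $m_\alpha(x)$ has a positive real root. This can be argued, for instance, via Theorem~\ref{thm:Curtiss' result} (with a multiplier taken in $\qq[x]$): if $m_\alpha(x)$ had no positive root, some multiple of it would have all coefficients nonnegative, giving a relation $\sum_i p_i\alpha^i=0$ with $p_i\in\nn_0$ not all zero; dividing by the lowest power of $\alpha$ occurring shows $-1\in M_\alpha$, so $M_\alpha$ is a group and hence antimatter, contradicting $|\mathcal{A}(M_\alpha)|\ge 2$. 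With that observation in place you may replace $\alpha$ by a positive conjugate, and the rest of your proof goes through.
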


\begin{proof}
    Set $m := |\mathcal{S}(M_\alpha)|$ and $n := |\mathcal{A}(M_\alpha)|$. It suffices to show that the equality $\alpha^m = \sum_{i=0}^{m-1} q_i\alpha^i$ holds for some $q_0, \dots, q_{m-1} \in \Q_{\ge 0}$. Indeed, if $\ell$ is a common multiple of the denominators $\mathsf{d}(q_0), \dots, \mathsf{d}(q_{m-1})$, then the polynomial
    \[
        \ell x^m - \sum_{i=0}^{m-1} \ell q_i x^i \in \zz[x]
    \]
    will have~$\alpha$ as a root and, therefore, be divisible by $m_\alpha(x)$. Observe that because $m \le n-1$, the atom $\alpha^{n-1}$ is not a strong atom, so we can write $\alpha^{n-1}$ as $\sum_{i=0}^{n-2} q_{n-1,i}\alpha^i$, where each $q_{n-1,i}\in\qq_{\ge 0}$.
    
    Now we prove by induction that for all $k \in \nn$ such that $k \in \ldb m,n-1 \rdb$, we can write $\alpha^k$ as a linear combination of the elements $\alpha^i$ for $i \in \ldb 0,k-1 \rdb$. We have proved the base case of $k = n - 1$, so fix $m \leq k < n - 1$ and assume that for all $j$ where $k< j<n$, we have $\alpha^j$ equal to $\sum_{i=0}^{j-1}q_{j,i}\alpha^i$, where $q_{j,i}\in \qq_{\ge 0}$. Denote this summation with $z_j$. Because $\alpha$ is not a root of any polynomial in $\qq[x]$ with degree $m \ge 1$, at least two coefficients $q_{j,i}$ are nonzero for each~$j$. Now let $\sum_{i=0}^{n-1}c_i\alpha^i$ be a nontrivial factorization of $c\alpha^k$ for some large enough $c\in\N$, which must exist because $\alpha^k$ is not a strong atom. Going from the $\alpha^{n-1}$ term to the $\alpha^{k+1}$ term, we can successively rewrite the $\alpha^i$ in each term as $z_i$ for $k< i<n$ from our assumption. We then obtain a new representation $c\alpha^k=\sum_{i=0}^k q_{k,i}\alpha^i$, where, since each $z_i$ has at least two nonzero coefficients, at least two $q_{k,i}$ are nonzero. We can then assume that $q_{k,k}=0$ by subtracting an appropriate rational multiple of $\alpha^k$ from both sides. This completes the induction step. Hence we can write $\alpha^m = \sum_{i=0}^{m-1} q_i\alpha^i$, as desired.
\end{proof}

As direct consequences of the previous proposition, we obtain the following corollaries, which will be useful later.

\begin{corollary}\label{cor:minimal poly condition for atoms}
    Let $\alpha$ be an algebraic number with minimal polynomial $m_\alpha(x) \in \qq[x]$. Then the following statements hold.
    \begin{enumerate}
        \item If $p(x)$ is a polynomial of minimal degree such that $p(x)m_\alpha(x) = x^n - \sum_{i=0}^{n-1} a_i x^i$ for some $n \in \N$ and coefficients $a_0, \dots, a_{n-1} \in \N_0$, then $|\mathcal{A}(M_\alpha)| = n$.
        \smallskip
        
        \item If $M_\alpha$ is finitely generated and $q(x)$ is a polynomial of minimal degree such that  $q(x)m_\alpha(x) = b_m x^m - \sum_{i=0}^{m-1} b_i x^i$ for some $m \in \N$ and coefficients $b_0, \dots, b_m \in \N_0$, then $|\mathcal{S}(M_\alpha)| = m$.
    \end{enumerate}
\end{corollary}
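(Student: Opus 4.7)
My plan is to translate each polynomial condition into an arithmetic relation inside $M_\alpha$ and then read off the counts from \Cref{thm:atomicity of M alpha} and \Cref{lem:strong atom as sum of lower degree terms}, finishing part~(2) with a short minimality argument.

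For part~(1), the key observation is that, since $m_\alpha(x)$ is monic, the existence of $p(x) \in \qq[x]$ with $p(x)m_\alpha(x) = x^n - \sum_{i=0}^{n-1} a_i x^i$ and $a_i \in \nn_0$ is equivalent to the identity $\alpha^n = \sum_{i=0}^{n-1} a_i \alpha^i$ holding in $M_\alpha$, that is, to $\alpha^n$ belonging to the additive submonoid generated by $\{1,\alpha,\dots,\alpha^{n-1}\}$; moreover, $\deg p(x) = n - \deg m_\alpha(x)$, so minimizing $\deg p(x)$ is equivalent to minimizing $n$. Any such relation also forces $M_\alpha$ to be generated as an additive monoid by $\{1,\alpha,\dots,\alpha^{n-1}\}$, so $M_\alpha$ is finitely generated and (under the atomicity hypothesis implicit in the paper's setting) atomic. \Cref{thm:atomicity of M alpha} then gives $|\mathcal{A}(M_\alpha)| = \sigma$, where $\sigma$ is precisely the minimal such $n$, so $|\mathcal{A}(M_\alpha)| = n$ as claimed.

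For part~(2), in the main range $1 \le |\mathcal{S}(M_\alpha)| < |\mathcal{A}(M_\alpha)|$ I would invoke \Cref{lem:strong atom as sum of lower degree terms} to obtain $f(x) \in \qq[x]$ such that $f(x)m_\alpha(x)$ has degree $|\mathcal{S}(M_\alpha)|$, negative leading coefficient, and nonnegative remaining coefficients; negating and clearing denominators then produces a polynomial $q(x) \in \zz[x]$ of the required shape with $m = |\mathcal{S}(M_\alpha)|$. The boundary case $|\mathcal{S}(M_\alpha)| = |\mathcal{A}(M_\alpha)|$ is covered by the polynomial from part~(1), which already has this shape with $b_m = 1$. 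The crux is then minimality: if some $q'(x)$ yielded a decomposition $q'(x)m_\alpha(x) = b'_{m'}x^{m'} - \sum_{i=0}^{m'-1} b'_i x^i$ with $m' < |\mathcal{S}(M_\alpha)|$, the identity $b'_{m'}\alpha^{m'} = \sum_{i=0}^{m'-1} b'_i \alpha^i$ would express $b'_{m'}\alpha^{m'}$ as a sum of atoms $\alpha^i$ with $i < m'$ (these are atoms because $m' \le |\mathcal{S}(M_\alpha)| \le |\mathcal{A}(M_\alpha)|$) that is genuinely distinct from the trivial factorization into $b'_{m'}$ copies of $\alpha^{m'}$, contradicting that $\alpha^{m'}$ is a strong atom.

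The most delicate step, in my view, is this minimality check—in particular verifying that the translated decomposition is genuinely nontrivial (not all $b'_i$ can vanish, for otherwise $\alpha^{m'} = 0$, which is impossible since $\alpha \ne 0$ and $b'_{m'} > 0$). Everything else amounts to bookkeeping between polynomial identities and monoid-theoretic relations, and to invoking the two earlier results in their proper ranges.
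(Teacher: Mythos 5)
Your proof is correct and follows the route the paper intends (the corollary is stated there without a written proof, as a direct consequence of the preceding results): part (1) is exactly the translation of the minimal monic relation into the minimality description of $\sigma$ in \Cref{thm:atomicity of M alpha}, and part (2) combines \Cref{lem:strong atom as sum of lower degree terms} (together with the monic relation coming from finite generation in the boundary case $|\mathcal{S}(M_\alpha)| = |\mathcal{A}(M_\alpha)|$) with the observation that a relation at a smaller exponent would produce a second factorization of a multiple of a strong atom. Your explicit nontriviality check in the minimality step (not all $b'_i$ vanish, and the atoms $\alpha^i$ with $i < m'$ are distinct from $\alpha^{m'}$) is the right place to be careful, and you handle it correctly.
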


\begin{corollary}\label{cor:more strong atoms than degree}
    Let $\alpha$ be an algebraic number with minimal polynomial $m_\alpha(x) \in \qq[x]$. If $M_\alpha$ is finitely generated, then $|\mathcal{S}(M_\alpha)| \ge \deg m_\alpha(x)$.
\end{corollary}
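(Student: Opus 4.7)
The plan is to apply Corollary~\ref{cor:minimal poly condition for atoms}(2) directly. That corollary expresses $|\mathcal{S}(M_\alpha)|$ as the degree of $q(x)\,m_\alpha(x)$ for a suitable polynomial $q(x) \in \qq[x]$ of minimal degree, and the desired inequality will drop out once we recognize that $q(x)$ must be nonzero and therefore has nonnegative degree.

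First, I would verify that the hypothesis of Corollary~\ref{cor:minimal poly condition for atoms}(2) can actually be invoked, namely that the family $\mathcal{F}$ of nonzero polynomials $q(x) \in \qq[x]$ for which $q(x)\,m_\alpha(x)$ has the form $b_m x^m - \sum_{i=0}^{m-1} b_i x^i$ (for some $m \in \nn$ and $b_0,\dots,b_m \in \nn_0$) is nonempty. This is guaranteed by Corollary~\ref{cor:minimal poly condition for atoms}(1): since $M_\alpha$ is finitely generated, it is atomic with finitely many atoms, and the relation $\alpha^n = \sum_{i=0}^{n-1} a_i \alpha^i$ (with $n = |\mathcal{A}(M_\alpha)|$ and $a_i \in \nn_0$) produces a polynomial $p(x) \in \qq[x]$ with $p(x)\,m_\alpha(x) = x^n - \sum_{i=0}^{n-1} a_i x^i$. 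This $p(x)$ lies in $\mathcal{F}$ (with leading coefficient $b_n = 1$), so $\mathcal{F} \ne \emptyset$ and a minimum-degree element $q(x) \in \mathcal{F}$ exists.

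Next, I would invoke Corollary~\ref{cor:minimal poly condition for atoms}(2) with this $q(x)$ to obtain $|\mathcal{S}(M_\alpha)| = m$, where $q(x)\,m_\alpha(x) = b_m x^m - \sum_{i=0}^{m-1} b_i x^i$. Because the right-hand side is a nonzero polynomial whose leading coefficient $b_m$ is nonnegative, we have $b_m > 0$, and hence $m = \deg\bigl(q(x)\,m_\alpha(x)\bigr) = \deg q(x) + \deg m_\alpha(x)$. Since $q(x)$ is nonzero, $\deg q(x) \ge 0$, which yields $|\mathcal{S}(M_\alpha)| = m \ge \deg m_\alpha(x)$, as desired.

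There is no substantial obstacle here; the corollary is essentially designed to fall out of the preceding Corollary~\ref{cor:minimal poly condition for atoms}. The only subtle point is to confirm that the family $\mathcal{F}$ is nonempty before quoting part (2), and this is exactly what part (1) supplies for a finitely generated $M_\alpha$.
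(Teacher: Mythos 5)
Your argument is correct and takes essentially the same route as the paper, which presents Corollary~\ref{cor:more strong atoms than degree} as an immediate consequence of Lemma~\ref{lem:strong atom as sum of lower degree terms} and Corollary~\ref{cor:minimal poly condition for atoms}: the minimal witnessing polynomial is divisible by $m_\alpha(x)$, so its degree $|\mathcal{S}(M_\alpha)|$ is at least $\deg m_\alpha(x)$. The only cosmetic point is that the nonemptiness of your family $\mathcal{F}$ really comes from finite generation (the relation $\alpha^n=\sum_{i=0}^{n-1}a_i\alpha^i$ guaranteed by Theorem~\ref{thm:atomicity of M alpha}) rather than from Corollary~\ref{cor:minimal poly condition for atoms}(1) itself, but the substance of the step is exactly what the paper intends.
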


\medskip

\subsection{Binomial Minimal Polynomials}

It was proved in~\cite[Theorem~6.2]{GG18} that, for each $q \in \qq_{> 0} \setminus \nn^{\pm 1}$, the monoid $M_q$ is atomic with $|\mathcal{A}(M_q)| = \infty$. This result was generalized in~\cite[Proposition~3.9]{ABLST23} for any positive irreducible $n$-th root of any $q \in \qq_{> 0} \setminus \nn^{\pm 1}$ (for $q \in \qq_{> 0}$, an \emph{irreducible root} of $q$ is an $n$-th root $\alpha := \sqrt[n]{q}$ of $q$ such that $\alpha, \dots, \alpha^{n-1}$ are all irrationals). It is not difficult to argue that the polynomial $x^n - q$ is irreducible in $\qq[x]$ for any pair $(n,q) \in \zz_{\ge 2} \times \qq_{> 0}$ such that $\sqrt[n]{q}$ is an irreducible $n$-th root of $q$ (see \cite[page 297]{sL02}). Let us establish a similar result for the set of strong atoms.

\begin{theorem} \label{thm:strong atoms for irreducible n-th roots}
    For $q \in \qq_{> 0} \setminus \nn^{\pm 1}$, let $\alpha$ be a positive irreducible $n$-th root~$q$. Then the monoid $M_\alpha$ is atomic with $\mathcal{A}(M_\alpha) = \{\alpha^n : n \in \nn_0 \}$ and $\mathcal{S}(M_\alpha) = \emptyset$.
\end{theorem}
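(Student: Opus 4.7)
The atomicity of $M_\alpha$ and the description $\mathcal{A}(M_\alpha) = \{\alpha^k : k \in \nn_0\}$ are essentially already in hand. The generalization \cite[Proposition~3.9]{ABLST23} cited just before the theorem gives that $M_\alpha$ is atomic with $|\mathcal{A}(M_\alpha)| = \infty$ for any positive irreducible $n$-th root of an element of $\qq_{>0}\setminus\nn^{\pm 1}$, and Theorem~\ref{thm:atomicity of M alpha} then forces the atom set to take the claimed form. So the plan is to concentrate entirely on proving $\mathcal{S}(M_\alpha) = \emptyset$, which amounts to producing, for each $k \in \nn_0$, a positive integer $m$ so that $m\alpha^k$ admits a factorization in $M_\alpha$ distinct from the obvious one.

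The plan is to exploit the defining relation $\alpha^n = q$ directly. Writing $q = a/b$ in lowest terms, the hypothesis $q \in \qq_{>0}\setminus\nn^{\pm 1}$ rules out both $b = 1$ and $a = 1$, so $a, b \ge 2$. Clearing the denominator in $\alpha^n = a/b$ yields the identity
\[
    a \;=\; b\,\alpha^n
\]
inside the semidomain $\nn_0[\alpha]$. Multiplying by $\alpha^k$ in the semidomain then gives, for every $k \in \nn_0$,
\[
    a\,\alpha^k \;=\; b\,\alpha^{n+k} \quad \text{in } M_\alpha.
\]
The left-hand side, interpreted as a sum in $M_\alpha$, is the obvious factorization of $a\alpha^k$ into $a$ copies of the atom $\alpha^k$, while the right-hand side exhibits the same element as a sum of $b$ copies of the atom $\alpha^{n+k}$. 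Since $q \ne 1$ forces $\alpha \ne 1$ and hence $\alpha^k \ne \alpha^{n+k}$, these two factorizations involve disjoint sets of atoms and are therefore genuinely distinct in $\mathsf{Z}(M_\alpha)$. Thus $\alpha^k \notin \mathcal{S}(M_\alpha)$ for every $k \in \nn_0$, and because the atoms of $M_\alpha$ are exactly the powers of $\alpha$, one concludes $\mathcal{S}(M_\alpha) = \emptyset$.

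There is no genuinely hard step here: the only point that really deserves attention is that \emph{both} parts of the hypothesis $q \notin \nn^{\pm 1}$ are needed in order to force $a, b \ge 2$, which in turn is what makes each of the two displayed factorizations non-trivial (neither collapses to a single atom) and ensures they are distinct. Everything else is a direct consequence of the multiplicative structure of the ambient semidomain together with the already-established form of $\mathcal{A}(M_\alpha)$.
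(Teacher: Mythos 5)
Your proof is correct and follows essentially the same route as the paper: the atom set comes from \cite[Proposition~3.9]{ABLST23}, and the emptiness of $\mathcal{S}(M_\alpha)$ is deduced from the same relation $a\,\alpha^k = b\,\alpha^{n+k}$ (with $q=a/b$ in lowest terms and $a,b\ge 2$) that the paper uses. Your additional remarks on why $a,b\ge 2$ and why the two factorizations are distinct are just a more explicit rendering of the paper's terser argument.
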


\begin{proof}
    As shown in \cite[page 297]{sL02}, if $q=\frac{a}{b}$ for relatively prime $a,b\in \mathbb{Z}_{\ge2}$, the polynomial $x^n-\frac{a}{b}$ is irreducible in $\qq[x]$. Thus, the polynomial $bx^n-a$ is irreducible in $\zz[x]$. From~\cite[Proposition~3.9]{ABLST23}, the set of atoms is $\mathcal{A}(M_\alpha) = \{\alpha^n : n \in \nn_0\}$, and so $M_\alpha$ is an atomic monoid. Now observe that, for each $k \in \nn_0$, the equality $a \cdot \alpha^k = b \cdot \alpha^{n+k}$ holds, and so the atom $\alpha^k \in M_\alpha$ cannot be a strong atom of $M_\alpha$. As a consequence, we conclude that the set of strong atoms of $M_\alpha$ is empty.
\end{proof}

\begin{corollary}
    There exists $\alpha \in \aaa$ such that $f(\alpha) = (0,\infty)$.
\end{corollary}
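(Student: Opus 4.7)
The plan is to obtain this corollary as an immediate application of Theorem~\ref{thm:strong atoms for irreducible n-th roots}: it suffices to exhibit a single pair $(n,q)$ with $q \in \qq_{>0} \setminus \nn^{\pm 1}$ and $n \ge 2$ such that the positive real $n$-th root $\alpha := \sqrt[n]{q}$ is an irreducible $n$-th root of $q$, that is, such that $\alpha, \alpha^2, \dots, \alpha^{n-1}$ are all irrational. Once such an $\alpha$ is in hand, the theorem delivers $\mathcal{A}(M_\alpha) = \{\alpha^k : k \in \nn_0\}$ and $\mathcal{S}(M_\alpha) = \emptyset$, from which $f(\alpha) = (0,\infty)$ follows at once.

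First I would pick the simplest possible witness: set $q := 3/2$, which lies in $\qq_{>0} \setminus \nn^{\pm 1}$ since it is neither a positive integer nor the reciprocal of one, and take $n := 2$, so that $\alpha := \sqrt{3/2}$. The only condition that needs to be checked is that $\alpha$ itself is irrational, and this is immediate from the fact that $3/2$ is not the square of a rational number. Then $\alpha$ is a positive irreducible $2$-nd root of $q$ in the sense of the definition preceding Theorem~\ref{thm:strong atoms for irreducible n-th roots}, and a direct invocation of that theorem completes the argument.

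There is essentially no obstacle; the corollary is a one-line application of the theorem above, and the only minor point to verify is that the chosen example genuinely meets the definition of irreducible $n$-th root. In fact the same approach produces many more witnesses: for any prime $n \ge 2$ and any $q = a/b \in \qq_{>0}$ with $\gcd(a,b) = 1$ and $a, b \ge 2$, the polynomial $bx^n - a$ is irreducible over $\qq$ (as cited in the proof of Theorem~\ref{thm:strong atoms for irreducible n-th roots}), so $\alpha, \dots, \alpha^{n-1}$ are irrational and $\alpha := \sqrt[n]{q}$ satisfies $f(\alpha) = (0, \infty)$. A single such $\alpha$, however, is all the corollary requires.
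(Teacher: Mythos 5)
Your proof is correct and takes the same route the paper intends: the corollary is an immediate application of Theorem~\ref{thm:strong atoms for irreducible n-th roots}, and your witness $\alpha = \sqrt{3/2}$ (with $q = 3/2 \in \qq_{>0}\setminus\nn^{\pm 1}$ irrational-rooted) satisfies all its hypotheses, giving $f(\alpha) = (0,\infty)$. Only your closing aside overreaches: for coprime $a,b \ge 2$ the polynomial $bx^n - a$ need not be irreducible (e.g.\ $4x^2 - 9 = (2x-3)(2x+3)$), so the general family of witnesses requires the irreducible-root hypothesis, though this does not affect your chosen example or the corollary.
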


In the following example we exhibit a class of monoids $M_\alpha$ with infinitely many strong atoms, which implies that the pair $(\infty, \infty)$ is realizable.

\begin{proposition}
     Let $\alpha$ be a positive algebraic number with exactly three positive conjugates (including $\alpha$ itself). Then $f(\alpha) = (\infty, \infty)$.
\end{proposition}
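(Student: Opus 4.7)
The plan is to apply Descartes' Rule of Signs in the same spirit as Example~\ref{ex:one strong atom}. Since $\alpha$ has two positive conjugates besides itself, the minimal polynomial $m_\alpha(x)$ has a positive root distinct from~$\alpha$. Proposition~\ref{prop:sufficient condition for atomicity} then guarantees that $M_\alpha$ is atomic with infinitely many atoms, and Theorem~\ref{thm:atomicity of M alpha} yields $\mathcal{A}(M_\alpha) = \{\alpha^n : n \in \nn_0\}$, so $|\mathcal{A}(M_\alpha)| = \infty$.

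To show $|\mathcal{S}(M_\alpha)| = \infty$, I plan to prove the stronger statement that $\alpha^k$ is a strong atom for every $k \in \nn_0$. Assume, for contradiction, that this fails: then there exist $k \in \nn_0$, $n \in \nn$, and nonnegative integers $c_0, \dots, c_N$ not equal to the trivial tuple (namely $n$ at position $k$ and $0$ elsewhere) such that $n\alpha^k = \sum_{i=0}^N c_i \alpha^i$. Setting $g(x) := \sum_{i=0}^N c_i x^i - nx^k$, we obtain a nonzero polynomial in $\zz[x]$ vanishing at $\alpha$, so $m_\alpha(x) \mid g(x)$ in $\qq[x]$. In particular, each of the three distinct positive conjugates of $\alpha$ is a root of $g(x)$, so $g(x)$ has at least three positive real roots counted with multiplicity.

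The contradiction comes by bounding the number of positive real roots of $g(x)$. If $c_k \ge n$, every coefficient of $g(x)$ is nonnegative, so $g(\alpha) = 0$ with $\alpha > 0$ forces $g \equiv 0$, contradicting nontriviality. Otherwise $c_k < n$, and the only negative coefficient of $g(x)$ sits at degree~$k$, producing at most two sign variations in its coefficient sequence (at most one upon entering the negative coefficient from a lower run of nonnegative coefficients, and at most one upon leaving it to a higher run). Descartes' Rule of Signs then caps the number of positive real roots of $g(x)$ (with multiplicity) at two, contradicting the three-root lower bound of the preceding paragraph. Hence every $\alpha^k$ is a strong atom and $f(\alpha) = (\infty, \infty)$. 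I anticipate no serious obstacle: the only care needed is the sign-variation bookkeeping, which is straightforward once one observes that the single negative coefficient of $g(x)$ can be adjacent to at most two runs of nonnegative coefficients.
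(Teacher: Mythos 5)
Your proposal is correct and follows essentially the same route as the paper: atomicity with infinitely many atoms via Proposition~\ref{prop:sufficient condition for atomicity}, then Descartes' Rule of Signs applied to the polynomial witnessing a hypothetical non-trivial factorization of $n\alpha^k$, which has a single negative coefficient and hence at most two sign variations, contradicting the three positive roots of $m_\alpha(x)$. Your handling of the edge case $c_k \ge n$ is a small extra bit of care that the paper's version leaves implicit, but it is the same argument.
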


\begin{proof}
    Define $m(x)$ as the minimal polynomial of $\alpha$. Since $m(x)$ has $3$ positive roots, $M_\alpha$ is atomic with infinitely many atoms by \Cref{prop:sufficient condition for atomicity}. Assume, for the sake of contradiction, that some $\alpha^\ell$ is not a strong atom in $M_\alpha$. Then $\alpha$ must be a root of some polynomial $q(x) = \sum_{i=0}^N c_i x^i$, where $c_\ell \le -1$, and every other $c_i$ is nonnegative. Observe that $q(x)$ has at most $2$ changes in sign, so it has at most two positive real roots by Descartes' Rule of Signs. However, this contradicts that $q(x)$ is a multiple of $m(x)$, which has three positive real roots. Thus, every power of $\alpha$ is a strong atom, and so $f(\alpha) = (\infty, \infty)$.
\end{proof}


\bigskip
\section{Degree-2 Minimal Polynomials}
\label{sec:pair realizable by degree-2 algebraic numbers}

Let $\aaa_2$ be the subset of $\cc$ consisting of all algebraic numbers with minimal polynomials of degree~$2$. In this section, we determine $f(\aaa_2)$ by considering cases regarding the form of minimal polynomials of degree~$2$.

\begin{proposition} \label{prop:deg-2 minimal polynomial}
    Let $\alpha$ be an algebraic number of degree $2$, and let $m(x)$ be the only primitive polynomial that is a scalar multiple of the minimal polynomial of $\alpha$. Then the following statements hold.
    \begin{enumerate}
    	\item If $m(x) \in \nn_0[x]$, then $f(\alpha) = (0,0)$.
    	\smallskip
    	
        \item If $m(x) := ax^2+bx-c \in \zz[x]$ for some $a,b,c\in \mathbb{N}$ with $\sqrt{b^2+4ac}\in \mathbb{R}\setminus \mathbb{Q}$, then $f(\alpha) \in \{(0,0), (0,\infty)\}$.
        \smallskip

        \item If $m(x) := ax^2-bx+c \in \zz[x]$ for some $a,b,c \in \mathbb{N}$ with $\sqrt{b^2-4ac} \in \mathbb{R} \setminus \mathbb{Q}$, then $f(\alpha) = (1,\infty)$.
        \smallskip

        \item If $m(x) := ax^2-bx-c \in \zz[x]$ for some $a,b,c\in \mathbb{N}$ with $\sqrt{b^2+4ac} \in \mathbb{R} \setminus \mathbb{Q}$, then $f(\alpha) \in \{(2,2), (2,\infty)\}$.
    \end{enumerate}
\end{proposition}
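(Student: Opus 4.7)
The plan is to handle the four cases in turn, with each argument built around the algebraic relation $m(\alpha) = 0$ rewritten to isolate one sign class of monomials. The main tools will be Corollary~\ref{cor:minimal poly condition for atoms} and Corollary~\ref{cor:more strong atoms than degree} (which count $|\mathcal{A}(M_\alpha)|$ and $|\mathcal{S}(M_\alpha)|$ via minimal-degree polynomial multipliers of $m_\alpha(x)$), Theorem~\ref{thm:atomicity of M alpha} and Theorem~\ref{thm:factoriality characterization}, Proposition~\ref{prop:sufficient condition for atomicity}, and Descartes' Rule of Signs.

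For Case~(1), the plan is to show that $M_\alpha$ is in fact the full group $\zz[\alpha]$, so that $\mathcal{A}(M_\alpha) = \emptyset$. The relation $m(\alpha) = 0$ rearranges to $-c = a\alpha^2 + b\alpha$, which exhibits $-c$ as an element of $M_\alpha$; multiplying inside $\nn_0[\alpha]$ then gives $-c\alpha \in M_\alpha$. Combining with $1, \alpha \in M_\alpha$ and using closure under $+$, every integer combination $m + n\alpha$ lies in $M_\alpha$, so $M_\alpha = \zz[\alpha]$ is a group and $f(\alpha) = (0,0)$. For Case~(2), the relation $c \cdot 1 = b\alpha + a\alpha^2$ multiplied by $\alpha^k$ yields $c\alpha^k = b\alpha^{k+1} + a\alpha^{k+2}$ for every $k$. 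I would first rule out finite generation of $M_\alpha$ using Corollary~\ref{cor:minimal poly condition for atoms}(1) and Gauss's lemma (any $p \in \zz[x]$ with $p(x) m_\alpha(x)$ having leading term $x^n$ and all other coefficients nonpositive would have to absorb the $+bx$ term, which is forced to remain positive by coefficient-chasing). Then $M_\alpha$ is either non-atomic, in which case $1 = u + v$ transports to $\alpha^k = u\alpha^k + v\alpha^k$ for every $k$ and $\mathcal{A}(M_\alpha) = \emptyset$ (giving $f(\alpha) = (0,0)$); or $M_\alpha$ is atomic and non-finitely generated, whence $\mathcal{A}(M_\alpha) = \{\alpha^n : n \in \nn_0\}$ is infinite (since $m_\alpha(x)$ has negative constant term and is therefore not cyclotomic), the identity above yields two distinct factorizations of $c\alpha^k$, and $f(\alpha) = (0,\infty)$.

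For Case~(3), Proposition~\ref{prop:sufficient condition for atomicity} immediately gives $|\mathcal{A}(M_\alpha)| = \infty$ because $m_\alpha(x)$ has two distinct positive roots, and the identity $b\alpha^{k+1} = a\alpha^{k+2} + c\alpha^k$ shows every $\alpha^{k+1}$ fails to be a strong atom. To prove $1 \in \mathcal{S}(M_\alpha)$ I would assume a nontrivial factorization $n = \sum c_i \alpha^i$ and set $g(x) := \sum c_i x^i - n$. If $c_0 \ge n$, then all coefficients of $g$ are nonnegative and $g(\alpha) = 0$ with $\alpha > 0$ forces $g \equiv 0$, contradicting nontriviality. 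If $c_0 < n$, then $g(x)$ has exactly one sign variation, so Descartes' Rule gives at most one positive root, contradicting the fact that $m_\alpha(x) \mid g(x)$ and $m_\alpha$ has two distinct positive roots. Hence $\mathcal{S}(M_\alpha) = \{1\}$ and $f(\alpha) = (1,\infty)$.

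Case~(4) is the most delicate and carries the main technical load. The plan is to exploit the recurrence induced by $a\alpha^2 = b\alpha + c$: writing $\alpha^k = A_k + B_k\alpha$ in $\qq(\alpha)$ gives $A_{k+1} = (c/a) B_k$ and $B_{k+1} = A_k + (b/a) B_k$ with $(A_0, B_0) = (1, 0)$. A straightforward induction yields $A_k, B_k \in \qq_{\ge 0}$ for all $k$, with $B_k > 0$ for $k \ge 1$ and $A_k > 0$ for $k \ne 1$. Using linear independence of $\{1,\alpha\}$ over $\qq$, any representation $\sum c_i \alpha^i = r + s\alpha$ with $c_i \in \nn_0$ must satisfy $\sum c_i A_i = r$ and $\sum c_i B_i = s$; applied to $(r,s) = (1, 0)$, positivity of the $B_i$ forces $c_i = 0$ for every $i \ge 1$ and then $c_0 = 1$, and a symmetric application to $(r,s) = (0,1)$ forces $c_1 = 1$ with all other $c_i = 0$. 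This uniqueness of representation is precisely what makes $1$ and $\alpha$ strong atoms. The identity $a\alpha^k = b\alpha^{k-1} + c\alpha^{k-2}$ for $k \ge 2$ then shows every higher power of $\alpha$ that is itself an atom cannot be a strong atom, so $|\mathcal{S}(M_\alpha)| = 2$. To separate $(2,2)$ from $(2,\infty)$, apply Theorem~\ref{thm:factoriality characterization} to the minimal pair $(ax^2, bx+c)$: $M_\alpha$ is a UFM if and only if $a = 1$, giving $|\mathcal{A}(M_\alpha)| = 2$; when $a \ge 2$, any polynomial $p(x)$ satisfying the conclusion of Corollary~\ref{cor:minimal poly condition for atoms}(1) must by Gauss's lemma lie in $\zz[x]$ and have leading coefficient $1/a \notin \zz$, a contradiction, so $M_\alpha$ is not finitely generated and $|\mathcal{A}(M_\alpha)| = \infty$. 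The main obstacle is the bookkeeping in Case~(4): tracking positivity of $A_k$ and $B_k$ in the correct indices and translating it cleanly into the uniqueness statement that yields strong-atomicity of $1$ and $\alpha$.
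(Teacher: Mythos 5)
Your proposal is correct, and for parts (1)--(3) it runs essentially parallel to the paper: part (1) via invertibility/antimatter, part (2) via Gauss's lemma plus the sign-propagation induction on the quotient coefficients (which you only assert, but it is exactly the chase the paper carries out), and part (3) via Proposition~\ref{prop:sufficient condition for atomicity} and Descartes' Rule, where you even treat the constant term $c_0$ slightly more carefully than the paper does. The genuine divergence is in part (4): the paper proves that $\alpha$ (hence $1$) is a strong atom by a polynomial long-division argument, chasing signs of the quotient coefficients of a hypothetical multiple of $m(x)$ having a single negative coefficient; you instead expand $\alpha^k = A_k + B_k\alpha$ and show by the recurrence $A_{k+1}=(c/a)B_k$, $B_{k+1}=A_k+(b/a)B_k$ that the coordinates are nonnegative with $B_k>0$ for $k\ge 1$ and $A_k>0$ for $k\ne 1$, so linear independence of $\{1,\alpha\}$ forces uniqueness of every representation of $n\cdot 1$ and $n\cdot\alpha$. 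This is cleaner: it yields atom-ness and strong-atom-ness of $1$ and $\alpha$ simultaneously, and it works verbatim for either root of $m(x)$, whereas the paper's route stays within the polynomial-multiplier framework (Lemma~\ref{lem:strong atom as sum of lower degree terms}, Corollary~\ref{cor:minimal poly condition for atoms}) that it reuses in Section~\ref{sec:further realizable pairs}. Two small points to tighten: in (4) you should run the coordinate argument for $(r,s)=(n,0)$ and $(0,n)$ for all $n\in\mathbb{N}$, not just $n=1$ (the argument is identical, since $\sum c_iB_i=0$ kills all $c_i$ with $i\ge 1$, etc.), and in the subcase $a\ge 2$ the jump from ``not finitely generated'' to $|\mathcal{A}(M_\alpha)|=\infty$ needs the atomicity you already have (since $1$ is an atom, Theorem~\ref{thm:atomicity of M alpha} applies). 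Note also that in part (2) your dichotomy (non-atomic $\Rightarrow$ antimatter, atomic and non-finitely-generated $\Rightarrow$ infinitely many atoms) proves exactly the stated disjunction $f(\alpha)\in\{(0,0),(0,\infty)\}$ without deciding which occurs, while the paper additionally pins down the split according to $c=1$ versus $c>1$; and in part (1) your parenthetical ``$M_\alpha=\mathbb{Z}[\alpha]$'' should be justified by $-1\in M_\alpha$ together with multiplicative closure of $\mathbb{N}_0[\alpha]$ (since $\mathbb{Z}+\mathbb{Z}\alpha$ alone does not contain $\alpha^2$ when $a>1$), which is all you need to conclude $M_\alpha$ is a group.
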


\begin{proof}
	(1)  If $m(x) \in \nn_0[x]$, then $\nn_0[\alpha] = \zz^2 \times \zz/a$,
    where $a$ is the leading coefficient of $m(x)$.
    In this case, $M_\alpha$ is an abelian group, and thus an antimatter monoid. As a consequence, $\mathcal{A}(M_\alpha)$ and $\mathcal{S}(M_\alpha)$ are both empty sets, which means that $f(\alpha) = (0,0)$.
	\smallskip
	
    (2) If $c=1$, then $1=ax^2+bx$ and so $1\in M_\alpha$ is not an atom. Consequently, no elements of the generating set are atoms. In this case, $M_\alpha$ is antimatter, so $f(\alpha) = (0,0)$. Now assume that $c>1$. Then $c\cdot 1$ can be written as $a\cdot \alpha^2+b\cdot \alpha$, and so the inclusion $1 M_\alpha \subseteq M_\alpha$ ensures that $\alpha^n$ cannot be a strong atom for any $n \ge 0$. Hence $M_\alpha$ contains no strong atoms. Let us argue now that the equality $\mathcal{A}(M_\alpha) = \{\alpha^n : n \in \nn_0 \}$ holds. 

    Suppose, for the sake of contradiction, that $M_\alpha$ has finitely many atoms, and then set $n := |\mathcal{A}(M_\alpha)|$. Therefore we can write $\alpha^n = \sum_{i=0}^{n-1}p_i\alpha^i$ for some coefficients $p_0, \dots, p_{n-1} \in\mathbb{N}_0$. This assumption implies that $m(x)$ divides $x^n-\sum_{i=0}^{n-1}p_ix^i$ in $\zz[x]$, which in turn implies that $a=1$. Let $x^{n-2}+\sum_{i=0}^{n-3} q_ix^i$ for $q_0, \dots, q_{n-3}\in\mathbb{Z}$ be the quotient of the polynomial division such that
    \[
        x^n-\sum_{i=0}^{n-1}p_ix^i = (x^2+bx-c)\left(x^{n-2}+\sum_{i=0}^{n-3}q_ix^i\right).
    \]
    We can assume, without loss of generality, that $q_0 \neq 0$ as, otherwise, we can divide out a factor of~$x$. Since $p_0\geq 0$, we must have $cq_0=p_0$, meaning that $q_0$ is positive. Since $p_1\geq 0$, we have $cq_1-bq_0>0$, which also implies that $q_1$ is positive. Furthermore, we have the following relations: 
    \begin{align*}
       p_2 = cq_2-bq_1-q_0&\geq 0 \\
       p_3 = cq_3-bq_2-q_1&\geq 0 \\ 
       \vdots\\
       p_{n-3} = cq_{n-3}-bq_{n-4}-q_{n-5}&\geq 0.
    \end{align*}
    If $q_i$ and $q_{i+1}$ are positive, and we have the relation $cq_{i+2}-bq_{i+1}-q_i\geq 0$, then $q_{i+2}$ must be positive. Hence, by induction we have $q_{n-3}>0$. However, we also have the relation $p_{n-1}=-b-q_{n-3}\geq 0$, which implies $q_{n-3}<0$. This gives a contradiction. Therefore $f(\alpha) = (0,\infty)$ in this case.
    \smallskip
    
    (3) If $b=1$, then it is impossible for $\sqrt{b^2-4ac}$ to be real. Now assume that $b>1$. Then the roots of $m(x)$ are $\frac1{2a}(b \pm \sqrt{b^2-4ac})$ which are both positive. By \Cref{prop:sufficient condition for atomicity}, $M_\alpha$ is atomic with infinitely many atoms. Because $b\cdot \alpha = a\cdot \alpha^2+c$, $\alpha$ is not a strong atom, and the inclusion $\alpha M_\alpha \subseteq M_\alpha$ ensures that $\alpha^n$ cannot be a strong atom for any $n \ge 1$. On the other hand, we claim that $1$ is a strong atom. Suppose that this is not the case, and take $n \in \nn$ and $c_1, \dots, c_N \in \nn_0$ such that $n \cdot 1 = \sum_{i=1}^N c_i \alpha^i$. Now set
    \[
    	g(x) := \bigg( \sum_{i=1}^N c_i x^i \bigg) - n,
    \]
    and observe that $g(x)$ has at least two distinct positive real roots because it is divisible by $m(x)$, which has two distinct positive real roots. However, this contradicts the statement of the Descartes' Rule of Signs as $g(x)$ has only one variation of sign. Hence $\mathcal{S}(M_\alpha) = \{1\}$, so $f(\alpha) = (1, \infty)$.
    \smallskip
    
    (4) Since $a\cdot \alpha^2 = b\cdot \alpha +c$, the inclusion $\alpha^2 M_\alpha \subseteq M_\alpha$ ensures that $\alpha^i$ cannot be a strong atom for any $i \ge 2$. We claim that $\mathcal{S}(M_\alpha)=\{1,\alpha\}$. Suppose that $\alpha$ is not a strong atom. Then $m(x)$ divides a polynomial $f(x)$ of the form $c_0 - c_1x + \sum_{i=2}^n c_ix^i$ for some $n \in \nn$ and $c_i \in \nn_0$.
    Let $\sum_{i=0}^{N-2} d_i x^i$ for $d_0$, \dots, $d_{N - 2} \in \Z$
    be the quotient of the polynomial division such that
    \[
        c_0 - c_1x + \sum_{i = 2}^n c_ix^i
        = (ax^2 - bx - c)\sum_{i = 0}^{n - 2} d_ix^i.
    \]
    Since $-cd_0 = c_0 > 0$, we must have $d_0 < 0$.
    Also, $-c_1 = -bd_0 - cd_1$,
    and since $-c_1 < 0$ and $-bd_0, c > 0$, we must have $d_1 > 0$.
    We also have
    \begin{align*}
        c_2 = -cd_2 - bd_1 + ad_0 &\geq 0 \\
        c_2 = -cd_3 - bd_2 + ad_1 &\geq 0 \\
        \vdots\\
        c_{n - 2} = -cd_{n - 2} - bd_{n - 3} + ad_{n - 4} &\geq 0.
    \end{align*}
    We claim that for each $i \in \ldb 0, n - 3\rdb$, at least one of $d_i$ and $d_{i + 1}$ is negative.
    The statement is true for $i = 0$, so we proceed by induction. Assume the statement is true for some $i < n - 3$. If $d_{i + 1} < 0$, then the statement automatically is true for $i + 1$. Otherwise, we have $d_i < 0$ and $d_{i + 1} \geq 0$, in which case $c_{i + 2} = -cd_{i + 2} - bd_{i + 1} + ad_i \geq 0$
    and $-bd_{i + 1} \le 0$, $ad_i < 0$, which forces $d_{i + 2} < 0$, completing the induction.
    Now $d_{n - 2}$ must be positive, as $c_n = ad_{n - 2}$. Thus, $d_{n - 3}$ is negative,
    so $c_{n - 1} = ad_{n - 3} - bd_{n - 2} > 0$ gives a contradiction. Therefore $\alpha$ is a strong atom as desired, and so~$1$ is as well, proving that $|\mathcal{S}(M_\alpha)| = 2$.

    We now determine the possible values of $|\mathcal{A}(S)|$.
    If $a=1,$ then $\alpha^2 = b\cdot \alpha + c$ holds, so $\alpha^2$ is not an atom, and the inclusion $\alpha^2 M_\alpha \subseteq M_\alpha$ ensures that $\alpha^n$ cannot be an atom for any $n \ge 2$. The set of atoms is thus $\{1,\alpha\}$, as otherwise $m(x)$ would not be minimal.
    
    Now assume $a>1$. We claim that $\mathcal{A}(M_\alpha) = \{\alpha^n : n \in \nn_0\}$. To prove this assume, for the sake of contradiction, that $\alpha^n$ is not an atom. By Gauss's Lemma, if $m(x)$ divides a polynomial of the form $x^n-\sum_{i=0}^{N-1} c_ix^i$ for some $c_0, \dots, c_{N-1} \in \mathbb{N}_0$, then $a=1$, which is a contradiction. Hence $f(\alpha) \in \{(2,2), (2,\infty) \}$, concluding the proof.
\end{proof}


\bigskip
\section{Further Realizable Pairs}
\label{sec:further realizable pairs}

In this section, we determine other pairs that belong to the image of our main function $f$. Let us start with the following proposition.

\begin{proposition} \label{prop:when $m(x^k)$ is irreducible}
    Let $\alpha$ be an algebraic number with minimal polynomial $m(x) \in \qq[x]$ such that $m(x^k)$ is irreducible in $\qq[x]$ for some $k \in \nn_{\ge 2}$. If $\beta$ is a root of $m(x^k)$, then $f(\beta) = kf(\alpha)$.
\end{proposition}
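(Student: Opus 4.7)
The plan is to prove both $|\mathcal{A}(M_\beta)| = k|\mathcal{A}(M_\alpha)|$ and $|\mathcal{S}(M_\beta)| = k|\mathcal{S}(M_\alpha)|$ by combining \Cref{cor:minimal poly condition for atoms}, which expresses each cardinality as the minimum degree of a suitably signed polynomial divisible by the relevant minimal polynomial, with a Galois-theoretic decomposition tailored to the substitution $x \mapsto x^k$.

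For the upper bound, suppose $n := |\mathcal{A}(M_\alpha)|$ and let $p(x)m(x) = x^n - \sum_{i<n} a_i x^i$ with $a_i \in \nn_0$ be the minimal witness provided by \Cref{cor:minimal poly condition for atoms}(1). Then $p(x^k) \cdot m(x^k) = x^{kn} - \sum a_i x^{ki}$ is a witness of the same form with $m_\beta(x) = m(x^k)$, so $|\mathcal{A}(M_\beta)| \le kn$. The strong-atom case is entirely analogous using \Cref{cor:minimal poly condition for atoms}(2).

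The reverse inequality rests on the following decomposition lemma, which I regard as the main technical step: if $f(x) \in \qq[x]$ is divisible by $m(x^k)$ and one writes $f(x) = \sum_{r=0}^{k-1} x^r h_r(x^k)$ by grouping monomials by exponent residue modulo $k$, then $m(y) \mid h_r(y)$ for every $r \in \ldb 0, k-1 \rdb$. To prove this, I would use that the irreducibility of $m(x^k)$ forces $\zeta^j \beta$ to be a root of $m(x^k)$ for $\zeta := e^{2\pi i/k}$ and every $j \in \ldb 0, k-1 \rdb$, because $(\zeta^j\beta)^k = \beta^k = \alpha$ is a root of $m$. Plugging these $k$ values into $f$ produces a linear system $\sum_{r=0}^{k-1} \zeta^{jr} \beta^r h_r(\alpha) = 0$ in the unknowns $\beta^r h_r(\alpha)$ with nonsingular Vandermonde matrix, forcing $h_r(\alpha) = 0$, i.e., $m(y) \mid h_r(y)$.

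Granting the lemma, the lower bound is immediate. Take $f(x) = x^N - \sum_{i<N} c_i x^i$ with $c_i \in \nn_0$ realizing $N := |\mathcal{A}(M_\beta)|$, and set $r_0 := N \bmod k$; the only positive coefficient of $f$ lies in the residue class $r_0$, so each $h_r$ with $r \ne r_0$ has only nonpositive coefficients. Since $\alpha$ is positive (the prevailing setting of the paper), $h_r(\alpha) = 0$ forces $h_r \equiv 0$. What remains is $h_{r_0}(y) = y^{(N-r_0)/k} - \sum e_i y^i$ with $e_i \in \nn_0$, divisible by $m$, so \Cref{cor:minimal poly condition for atoms}(1) gives $|\mathcal{A}(M_\alpha)| \le (N-r_0)/k$, whence $N \ge k n$. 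The strong-atom case runs in parallel using \Cref{cor:minimal poly condition for atoms}(2) (the leading coefficient need only be positive, not $1$), and the infinite-cardinality cases follow because a hypothetical finite witness on the $\beta$ side would, via the decomposition, produce one on the $\alpha$ side. The main obstacle is the decomposition lemma itself, whose Vandermonde/Galois argument crucially exploits the irreducibility hypothesis on $m(x^k)$.
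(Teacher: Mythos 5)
Your overall strategy matches the paper's: the upper bounds come from substituting $x \mapsto x^k$ into a minimal witness for $\alpha$, and the lower bounds from splitting a hypothetical witness for $\beta$ into residue classes of exponents modulo $k$, showing each class is divisible by the relevant minimal polynomial, discarding the all-nonpositive classes, and descending the surviving class to a witness for $\alpha$. The only methodological difference is how the divisibility of the residue components is obtained: the paper writes the witness as $m(x^k)b(x)$ and reads the divisibility off the fact that $\supp m(x^k) \subseteq k\nn_0$ (so each residue component of the witness is $m(x^k)$ times the corresponding component of $b$), while you evaluate at the rotated roots $\zeta^j\beta$ and invert a Vandermonde matrix; both work (your lemma in fact only needs irreducibility of $m$, not of $m(x^k)$), though the paper's version is more elementary. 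Note also that both arguments quietly use positivity (you assume $\alpha>0$; the paper asserts that $m(x^k)$ has a positive real root), a hypothesis absent from the statement — a shared omission rather than a defect of your proposal alone.

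There is, however, one step that fails as written: the strong-atom case via Corollary~\ref{cor:minimal poly condition for atoms}(2). That corollary assumes $M_\alpha$ is finitely generated and pins the isolated positive coefficient of the witness to the \emph{leading} position. In the case $|\mathcal{S}(M_\alpha)|<\infty$ but $|\mathcal{A}(M_\alpha)|=\infty$ — which the proposition must cover, and which occurs in Example~\ref{ex:one strong atom} and in parts (2)--(4) of Proposition~\ref{prop:deg-2 minimal polynomial} — no multiple of $m$ (or of $m(x^k)$) of that shape need exist: a polynomial with positive leading coefficient and all other coefficients nonpositive has at most one sign change, hence at most one positive root, whereas $m$ may have two. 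So in that case neither your upper nor your lower bound for $|\mathcal{S}(M_\beta)|$ is established. The repair is exactly the witness the paper uses in its own proof: $\alpha^d$ fails to be a strong atom precisely when some integer multiple of $m_\alpha$ has exactly one positive coefficient, located at degree $d$ but not necessarily leading, and $|\mathcal{S}(M_\alpha)|$ is the minimal such $d$. With this formulation your argument goes through verbatim: under $x\mapsto x^k$ the isolated coefficient moves to degree $kd$, giving the upper bound; and in the lower bound every residue class other than $D \bmod k$ (where $D$ is the degree of the isolated coefficient on the $\beta$ side) still has only nonpositive coefficients, so it is killed by evaluation at the positive root, and the surviving class descends to a witness for $\alpha$ with isolated coefficient at degree $(D - (D \bmod k))/k$.
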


\begin{proof}
    Fix $k \in \nn_{\ge 2}$ such that the polynomial $m(x^k)$ is irreducible in $\qq[x]$, and let $\beta$ be a root of $m(x^k)$. As $m(x^k)$ is monic and irreducible, it must be the minimal polynomial of $\beta$. We are done once we establish the following two equalities:
    \[
        |\mathcal{A}(M_\beta)| = k |\mathcal{A}(M_\alpha)| \quad \text{ and } \quad |\mathcal{S}(M_\beta)| = k |\mathcal{S}(M_\alpha)|.
    \]
    To argue the first of these two equalities, take $g(x) \in \mathbb{Z}[x]$ to be the monic polynomial with minimal degree such that its non-leading coefficients are negative and $m(x)q(x)=g(x)$ for some $q(x) \in \mathbb{Z}[x]$. Then $\text{deg } g(x) = |\mathcal{A}(M_\alpha)|$. Since $m(x^k)q(x^k)=g(x^k)$ also holds, and $g(x^k)$ satisfies the condition that its leading coefficient is positive while the rest of its coefficients are negative, we have that
    \[
        \text{deg } g(x^k) = k |\mathcal{A}(M_\alpha)| \geq |\mathcal{A}(M_\beta)|.
    \]
    In order to show that equality holds, we will show that $g(x^k)$ has minimal degree among polynomials with this property.
   
    Suppose, for the sake of a contradiction, that the degree of $g(x^k)$ is not minimal among such polynomials, and let $h(x)\in\mathbb{Z}[x]$ be such a polynomial of minimal degree. Then, we can write $m(x^k)b(x) = h(x)$ for some $b(x) \in \mathbb{Z}[x]$ such that $\deg b(x) < \deg q(x^k)$. Let us argue the following claim.
    \smallskip

    \noindent \textsc{Claim.} $k$ is a common divisor of $\text{supp } b(x)$.
    \smallskip

    \noindent \textsc{Proof of Claim.} Suppose that $\text{supp } b(x)$ had two distinct elements mod $k$. For each $e \in \ldb 0,k-1 \rdb$, we let $h_e(x)$ denote the sum of the terms of $h(x)$ with degree $e$ mod $k$. Since $\text{supp } b(x)$ has two distinct elements modulo~$k$, we see that $h_a(x),h_b(x) \neq 0$ for some distinct $a,b$. Since $\text{supp } m(x^k)$ only consists of multiples of~$k$, the polynomial $m(x^k)$ divides both $h_a(x)$ and $h_b(x)$. We know that $h(x)$ only has one positive coefficient, meaning one of $h_a(x)$ and $h_b(x)$ must have all of its coefficients being negative. However, $m(x^k)$ has a positive real root, which gives a contradiction by Descartes' Rule of Signs. Therefore all the positive integers in $\text{supp } b(x)$ are equal modulo~$k$. Since $0 \in \text{supp } b(x)$ for $h(x)$ to be minimal, we see that $\text{supp } b(x) \subset k\nn_0$. The claim is then established.
    \smallskip

    In light of the proved claim, we can write $b(x) = b_1(x^k)$ and $h(x) = h_1(x^k)$ where $b_1(x), h_1(x) \in \mathbb{Z}[x]$. Therefore $m(x^k)b_1(x^k)= h_1(x^k)$, and so $m(x)b_1(x)= h_1(x)$. This, along with the inequality $\deg h(x) < \deg g(x^k)$, implies that $\deg h_1(x) < \deg g(x)$. However, this last inequality contradicts the minimality of $g(x)$. As a consequence, $|\mathcal{A}(M_\beta)| = \deg g(x^k) = k \deg g(x) = k |\mathcal{A}(M_\alpha)|$, as desired.
    \smallskip
   
    Establishing the equality $|\mathcal{S}(M_\beta)| = k |\mathcal{S}(M_\alpha)|$ is quite similar. First, let $g(x)$ be a polynomial in $\mathbb{Z}[x]$ with exactly one negative coefficient such that the degree~$d$ of the term with the negative coefficient is minimal, and $m(x)q(x)=g(x)$ for some $q(x) \in \mathbb{Z}[x]$. Then, for the sake of contradiction, we let $h(x)$ be a polynomial satisfying the same condition for $m(x^k)$, where the degree of the negative term is less than $k\deg g(x)$, and write $m(x^k)b(x) = h(x)$ for some polynomial $b(x) \in \mathbb{Z}[x]$. Since $h(x)$ only has one negative coefficient, we can follow as we did previously to obtain that all the terms of $\text{supp } b(x)$ are divisible by $k$. Then we can write $b(x) = b_1(x^k)$ and $h(x) = h_1(x^k)$, giving $m(x)b_1(x) = h_1(x)$. However, $h_1(x)$ has its negative coefficient term with degree less than $d$, which contradicts the minimality of $g(x)$. Hence the equality $|\mathcal{S}(M_\beta)| = k |\mathcal{S}(M_\alpha)|$ also holds, which concludes our proof.
\end{proof}

Observe that the hypothesis in the statement of Proposition~\ref{prop:when $m(x^k)$ is irreducible} is guaranteed to hold if the minimal polynomial $m(x)$ satisfies Eisenstein's criterion, whence we obtain the following corollary.

\begin{corollary} \label{cor:multiple realizable if eisenstein}
    Let $\alpha$ be an algebraic number with minimal polynomial $m(x) \in \zz[x]$ satisfying the hypothesis of Eisenstein's criterion. Then, for every $k \in \nn$, the polynomial $m(x^k)$ is irreducible in $\qq[x]$ and $f(\beta) = kf(\alpha)$ for any root $\beta$ of $m(x^k)$.
\end{corollary}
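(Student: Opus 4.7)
The plan is to reduce the corollary directly to Proposition~\ref{prop:when $m(x^k)$ is irreducible}. Since that proposition already delivers the formula $f(\beta) = kf(\alpha)$ whenever $m(x^k)$ is irreducible in $\qq[x]$ and $k \ge 2$, the only new content is the irreducibility of $m(x^k)$, and for this I would simply show that the Eisenstein criterion is preserved verbatim under the substitution $x \mapsto x^k$.

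Concretely, write $m(x) = \sum_{i=0}^n a_i x^i$ and let $p$ be a prime witnessing Eisenstein's criterion for $m(x)$, so that $p \nmid a_n$, $p \mid a_i$ for every $i \in \ldb 0, n-1 \rdb$, and $p^2 \nmid a_0$. Then
\[
    m(x^k) = \sum_{i=0}^n a_i x^{ik}
\]
has coefficient $a_i$ at degree $ik$ and coefficient $0$ at every other degree. Its leading coefficient is still $a_n$, hence coprime to $p$; its constant term is still $a_0$, hence divisible by $p$ but not by $p^2$; and every intermediate coefficient is either $0$ or one of $a_0, \dots, a_{n-1}$, and in either case is divisible by $p$. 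Thus $m(x^k)$ satisfies Eisenstein's criterion at the same prime $p$ and is therefore irreducible in $\qq[x]$.

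For $k \ge 2$, Proposition~\ref{prop:when $m(x^k)$ is irreducible} now applies and yields $f(\beta) = kf(\alpha)$ for every root $\beta$ of $m(x^k)$; the case $k = 1$ is the tautology $f(\alpha) = f(\alpha)$ and requires no further argument. I do not anticipate any genuine obstacle here: the entire corollary is a one-step deduction from the preceding proposition, and the only point that even deserves a glance is checking that the zero coefficients of $m(x^k)$ (at degrees not divisible by $k$) do not disturb Eisenstein's criterion, which they do not, since $p$ trivially divides $0$.
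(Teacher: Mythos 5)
Your proposal is correct and matches the paper's (implicit) argument: the paper derives the corollary from Proposition~\ref{prop:when $m(x^k)$ is irreducible} by simply observing that Eisenstein's criterion is inherited by $m(x^k)$, which is exactly the verification you carry out. Your explicit check of the coefficients of $m(x^k)$ and the separate remark that $k=1$ is trivial (since the proposition assumes $k \ge 2$) only make the deduction more careful than the paper's one-line remark.
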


As an immediate consequence of Proposition~\ref{prop:when $m(x^k)$ is irreducible}, we also obtain the following realization result.

\begin{corollary}
    If $f(\alpha) = (m,n)$, then the pair $(cm, cn)$ is realizable for every $c \in \nn_0$ such that $m_\alpha(x^c)$ is irreducible. 
\end{corollary}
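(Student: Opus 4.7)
The plan is to invoke Proposition~\ref{prop:when $m(x^k)$ is irreducible} essentially as a black box, since this corollary is a direct rephrasing of that proposition. Given $\alpha \in \aaa$ with $f(\alpha) = (m,n)$ and $c \in \nn_0$ such that $m_\alpha(x^c)$ is irreducible in $\qq[x]$, I need to exhibit some $\gamma \in \aaa$ satisfying $f(\gamma) = (cm, cn)$.

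I would first dispose of the edge cases $c \in \{0, 1\}$. When $c = 0$, the expression $m_\alpha(x^0)$ collapses to the scalar $m_\alpha(1) \in \qq$, which is either zero or a unit in $\qq[x]$; in either situation it is not irreducible, so the hypothesis is vacuous and the claim holds trivially. When $c = 1$, the polynomial $m_\alpha(x^1) = m_\alpha(x)$ is irreducible by definition of the minimal polynomial, and taking $\gamma := \alpha$ gives $f(\gamma) = (m,n) = 1 \cdot (m,n)$ immediately. For the main case $c \ge 2$, I take $\gamma := \beta$ for any complex root $\beta$ of $m_\alpha(x^c)$; such a $\beta$ exists by the Fundamental Theorem of Algebra and lies in $\aaa$ because $m_\alpha(x^c) \in \qq[x]$. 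Since $m_\alpha(x)$ is monic, so is $m_\alpha(x^c)$, and the assumed irreducibility then forces $m_\alpha(x^c)$ to be the minimal polynomial of $\beta$. Applying Proposition~\ref{prop:when $m(x^k)$ is irreducible} with $k := c$ yields $f(\beta) = c \cdot f(\alpha) = (cm, cn)$, exhibiting $(cm, cn)$ as realizable.

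There is no genuine obstacle in this argument, since the entire content of the corollary is packaged into the preceding proposition; the only point requiring any care is verifying that the root $\beta$ furnished by the Fundamental Theorem of Algebra actually qualifies as an algebraic number, which is automatic as $m_\alpha(x^c) \in \qq[x]$, and that $m_\alpha(x^c)$ is genuinely its minimal polynomial, which follows from irreducibility together with monicity.
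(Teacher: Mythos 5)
Your argument is correct and matches the paper, which states this corollary as an immediate consequence of Proposition~\ref{prop:when $m(x^k)$ is irreducible} without further proof; your extra care with the edge cases $c\in\{0,1\}$ (where the proposition's hypothesis $k\ge 2$ does not apply) is a sensible addition but does not change the substance.
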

\medskip

Next, we prove the following realization result.

\begin{theorem} \label{thm:4k_5k_realize}
    The pair $(4k+c,5k+c)$ is realizable for all $(k,c) \in \nn \times \nn_0$.
\end{theorem}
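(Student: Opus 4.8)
The plan is to translate the problem into the language of Corollary~\ref{cor:minimal poly condition for atoms} and then, for each pair $(k,c)$, to build an explicit algebraic number whose minimal polynomial has exactly one positive root and two prescribed ``threshold degrees.'' Writing $m(x)$ for the minimal polynomial of the sought $\alpha$, Corollary~\ref{cor:minimal poly condition for atoms} tells us that $|\mathcal{A}(M_\alpha)|$ is the least degree of a monic integer multiple of $m(x)$ of the shape $x^n - \sum_{i<n} a_i x^i$ with $a_i \in \nn_0$, while $|\mathcal{S}(M_\alpha)|$ is the least degree of an integer multiple of the shape $b_{m'} x^{m'} - \sum_{i<m'} b_i x^i$ with $b_i \in \nn_0$ and $b_{m'} \in \nn$. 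So I would aim to produce $m(x)$ whose first threshold is $5k+c$ and whose second is $4k+c$. Two preliminary observations guide the construction. First, any multiple of the second shape has a single sign variation, so Descartes' Rule of Signs forces $m(x)$ to have exactly one positive root; this is precisely the regime in which $M_\alpha$ is finitely generated, so that Corollary~\ref{cor:more strong atoms than degree} and Corollary~\ref{cor:minimal poly condition for atoms} apply, and it is the feature I would design into $m(x)$. Second, clearing denominators shows that the strong-atom threshold equals the least degree at which $m(x)$ admits a \emph{rational} positive-leading multiple with nonpositive remaining coefficients; hence the gap $|\mathcal{A}(M_\alpha)| - |\mathcal{S}(M_\alpha)| = k$ is exactly the number of degrees for which monic integrality remains obstructed after this rational cone first becomes nonempty.

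For the dependence on $k$ I would lean on Proposition~\ref{prop:when $m(x^k)$ is irreducible} and Corollary~\ref{cor:multiple realizable if eisenstein}: a single base number $\alpha_0$ with $f(\alpha_0) = (4,5)$ whose minimal polynomial is Eisenstein yields, under the substitution $x \mapsto x^k$, a number realizing $(4k,5k)$. An inspection of the constraints above shows that such a base must have minimal polynomial of degree exactly $3$: degree $2$ caps the strong-atom count at $2$ by the results of Section~\ref{sec:pair realizable by degree-2 algebraic numbers}, while at degree $4$ the leading-degree multiplier is a constant, which forces the strong-atom and atom thresholds to coincide there. Thus the first task is to exhibit a degree-$3$ Eisenstein polynomial with one positive root whose rational positive-leading cone opens at degree $4$ but whose first monic integer relation is delayed to degree $5$; I would verify this base case directly from Corollary~\ref{cor:minimal poly condition for atoms}, using Descartes' Rule to rule out the lower-degree relations.

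The genuinely delicate part is the additive parameter $c$, because $x \mapsto x^k$ scales both coordinates and therefore cannot, on its own, produce the shift $(4k,5k) \mapsto (4k+c,5k+c)$. I would handle this by a padding step that raises both threshold degrees by exactly one while preserving both the gap $k$ and the irreducibility of the minimal polynomial, and then iterate it $c$ times; equivalently, one seeks to show that whenever $(m,n)$ is realized by a finitely generated $M_\alpha$, so is $(m+1,n+1)$. The hard point is to insert a new lowest strong atom, pushing the entire atomic tower up by one, without accidentally opening a positive-leading multiple of smaller degree (which would lower $|\mathcal{S}(M_\alpha)|$) and without introducing a premature monic integer relation (which would lower $|\mathcal{A}(M_\alpha)|$). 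Descartes' Rule, together with a minimality induction in the spirit of Lemma~\ref{lem:strong atom as sum of lower degree terms} and the proof of Proposition~\ref{prop:when $m(x^k)$ is irreducible}, is the natural instrument for controlling these sign patterns. I expect this simultaneous control of the two thresholds, keeping them exactly $k$ apart throughout the padding, to be the main obstacle; should a clean one-step padding prove elusive, the fallback is a single explicit parametrized minimal polynomial $m_{k,c}(x)$ depending on both $k$ and $c$, whose two threshold degrees are read off directly from Corollary~\ref{cor:minimal poly condition for atoms} and Descartes' Rule.
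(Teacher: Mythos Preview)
Your $c=0$ plan matches the paper's exactly: a degree-$3$ Eisenstein base with $f=(4,5)$ (the paper uses $x^3-8x^2+4x-2$) combined with Corollary~\ref{cor:multiple realizable if eisenstein} to obtain $(4k,5k)$.

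For $c>0$, your primary route has a genuine gap. The padding lemma ``$(m,n)$ realizable by a finitely generated $M_\alpha$ $\Rightarrow$ $(m+1,n+1)$ realizable'' is never given a construction, and the tools you invoke (Descartes' Rule, Lemma~\ref{lem:strong atom as sum of lower degree terms}, the support argument from Proposition~\ref{prop:when $m(x^k)$ is irreducible}) analyze sign patterns of multiples of a \emph{fixed} minimal polynomial; they do not manufacture a new irreducible polynomial with both thresholds shifted by exactly one. There is no evident one-step perturbation that simultaneously preserves irreducibility, keeps a unique positive root, and moves both thresholds by one, and nothing in the paper suggests such a general lemma is available. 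The paper does not attempt padding: it goes directly to what you call the fallback, the explicit family
\[
m_{k,c}(x)=x^{3k+c}-8x^{2k+c}+4x^{k+c}-2x^c-2,
\]
which is Eisenstein at $2$. The upper bounds on the thresholds come from the multipliers $1+2x^k$ and $(1+x^k)^2$, but be warned that the lower bounds are \emph{not} handled by Descartes' Rule. The paper instead runs a coefficient-by-coefficient induction on an arbitrary low-degree multiplier $q(x)$, using the specific exponent spacing $\{0,c,k+c,2k+c,3k+c\}$ of $m_{k,c}$ to show that the supports of the shifted copies $x^d m_{k,c}(x)$ barely overlap, forcing every coefficient of $q$ to vanish. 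That combinatorial support analysis---not a sign-variation count---is the real content of the $c>0$ case, and your proposal does not anticipate it.
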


\begin{proof}
    For the case when $c=0$, we claim that for the positive root $\alpha$ of $m(x) = x^3 - 8x^2 + 4x - 2$, we have $f(\alpha) = (4,5)$. Since $m(x)$ satisfies Eisenstein's criterion, it will follow from Corollary~\ref{cor:multiple realizable if eisenstein} that the pair $(4k,5k)$ is realizable for any $k \in \nn$. To see that $|\mathcal{A}(M_\alpha)| = 5$, note that
    \[
    	(x^2+2x+1)m(x) = x^5 - 6x^4 - 11x^3 - 2x^2 - 2,
    \]
    so $|\mathcal{A}(M_\alpha)|\le 5$. By Corollary~\ref{cor:more strong atoms than degree}, $|\mathcal{A}(M_\alpha)| \ge 3$, and clearly $|\mathcal{A}(M_\alpha)|\ne 3$ as $m(x)$ has multiple negative coefficients. It remains to show that $|\mathcal{A}(M_\alpha)| \ne 4$.
    
    Suppose, for the sake of a contradiction, that there are $4$ atoms. Then there must exist some linear polynomial $f(x) = bx+a \in \zz[x]$ such that $f(x)m(x)$ has a leading coefficient $1$ and all other coefficients non-positive. Multiplying, we get that
    \[
        f(x)m(x) = bx^4 + (a-8b)x^3 + (4b-8a)x^2 + (4a-2b)x - 2a.
    \]
    Note that we must have $b=1$, and also $4b-8a\le 0$ and $4a-2b\le 0$ imply that $b-2a=0$. Thus, clearly $a$ is not an integer, so we reach a contradiction, proving that indeed $|\mathcal{A}(M_\alpha)| = 5$. To show that $|\mathcal{S}(M_\alpha)| = 4$, note that due to Corollary~\ref{cor:more strong atoms than degree}, we have $|\mathcal{S}(M_\alpha)|\ge 3$, and clearly there are more than three strong atoms as $m(x)$ has multiple negative coefficients. Note that $(2x+1)m(x) = 2x^4-15x^3-2$, which satisfies the conditions of Corollary~\ref{cor:minimal poly condition for atoms}, so $|\mathcal{S}(M_\alpha)| = 4$.
    \smallskip
    
    We now consider the case when $c>0$. Fix $(k,c) \in \nn \times \N$, and let us show that the $(4k+c,5k+c)$ belongs to the image of $f$. To do so, first observe that the polynomial
    \[
    	m(x) = x^{3k+c} - 8x^{2k+c} + 4x^{k+c} - 2x^c - 2
    \]
    is irreducible in $\zz[x]$ by Eisenstein's criterion at the ideal $2\zz$ and, therefore, in $\qq[x]$ by virtue of Gauss's lemma. In addition, $m(x)$ has a positive root $\alpha$ as $m(0) < 0$. We proceed to show that $M_\alpha$ is an atomic monoid with exactly $5k+c$ atoms and $4k+c$ strong atoms. To see that there are at most $5k+c$ atoms, consider the polynomial $f(x) = 1 + 2x^k + x^{2k}$, so that
    \[
        f(x)m(x)=-2-2x^c-4x^k-2x^{2k}-2x^{2k+c}-11x^{3k+c}-6x^{4k+c}+x^{5k+c}.
    \]
    Similarly, there are at most $4k+c$ strong atoms since multiplying $m(x)$ by $1+2x^k$ gives
    \[
    	(1+2x^{k}) m(x) = -2-2x^c-4x^k-15x^{3k+c}+2x^{4k+c}.
    \]
    Note that there are no issues in both cases when $c$ is a multiple of $k$, as all non-leading coefficients will remain negative. Now suppose for the sake of contradiction that $\alpha^{4k+c-1}$ is not a strong atom. Then there exists some $q(x)$ so that $m(x)q(x)$ has all non-positive coefficients except for the coefficient of $x^{4k+c-1}$. We can assume $\deg q(x)=k-1$ by Lemma~\ref{lem:strong atom as sum of lower degree terms}, so write $q(x)=\sum_{d=0}^{k-1}a_dx^d$. Then
    \[
    	 m(x) a_dx^d = -2a_dx^d-2a_dx^{d+c}+4a_dx^{k+c+d}-8a_dx^{2k+c+d}+a_dx^{3k+c+d}.
    \]
    Observe that for any $d<k-1$, the set $\supp(a_{k-1}x^{k-1}m(x))\,\cap\,\supp(a_{d}x^{d}m(x))$ contains only numbers less than $k+c+d$. Therefore the coefficients of the $x^{2k+c-1}$ and $x^{3k+c-1}$ terms are given by $4a_{k-1}$, and $-8a_{k-1}$, respectively, from which it follows that $a_{k-1} = 0$. However, this contradicts that $\deg q(x) = k-1$.
    
    Similarly, for the atoms, suppose for the sake of a contradiction that $\alpha^{5k+c-1}$ is not an atom. Then there exists some $q(x)=\sum_{d=0}^{k-1}q_d(x)$ where $q_d(x)=a_dx^d+b_dx^{d+k}$ so that $m(x)q(x)$ has all non-positive coefficients except for the term $x^{5k+c-1}$, which has coefficient~$1$. We have
    \begin{align*}
    		m(x)q_d(x) &= -2a_dx^d-2a_dx^{d+c}-2b_dx^{d+k}+(4a_d-2b_d)x^{k+c+d} \\
    							&+(4b_d-8a_d)x^{2k+c+d}+(a_d-8b_d)x^{3k+c+d}+b_dx^{4k+c+d}.
    \end{align*}
	Note that the set $\supp(q_d(x)m(x))\,\cap\,\supp(q_{d'}(x)m(x))$ contains only numbers less than $2k+c+d'$  for each pair $(d',d)$ with $d'<d<k$. Therefore the coefficients of the terms $x^{2k+c+d}, x^{3k+c+d}$, and $x^{4k+c+d}$ in $m(x)q(x)$ are given by $4a_d-8b_d$, $a_d-8b_d$, and $b_d$, respectively. Hence, when $d<k-1$, the inequalities $b_d \le 0$ and $a_d \le 8b_d$ hold, and so it follows that $a_d \le 0$. Now, consider the coefficient of term $x^d$ for $d<k-1$ in $m(x)q(x)$. Only $q_d(x)$ and $q_{d-c}(x)$ can contribute and so the coefficient of the $x^d$-term of the polynomial $q(x)m(x)$ is $-2a_d - 2a_{d-c}$, which results from writing the $x^d$-term as $-2a_dx^d - 2a_{d-c}x^{(d-c) + c}$ (set $a_{d-c}$ to be $0$ when $d-c < 0$). Suppose that $a_{d-c}=0$ for some $d$, and notice that because the coefficient in $m(x)q(x)$ of the term $x^d$ cannot be positive, $a_d \le 0$, and so $a_d = 0$. Since $a_{d-c}=0$ for $d<c$, it follows by induction that $a_d = 0$ for each $d<k-1$. Additionally, from the inequalities $b_d\le 0$ and $8b_d \ge a_d$ we obtain that $b_d = 0$. Thus, $q(x) = q_{k-1}(x) = a_{k-1}x^{k-1}+x^{2k-1}$. Now multiplying $q(x)$ by $m(x)$ gives 
    \begin{align*}
    	m(x)q(x) &=-2a_{k-1}x^{k-1}-2a_{k-1}x^{{k-1}+c}-2x^{{2k-1}}+(4a_{k-1}-2)x^{{2k+c}-1} \\
    					&+(4-8a_{k-1})x^{3k+c-1}+(a_{k-1}-8)x^{4k+c-1}+x^{5k+c-1}.
    \end{align*}
	We get $a_{k-1} \ge 0$ from the first coefficient, and $a_{k-1}\le\frac12$ from the coefficient of the term $x^{2k+c-1}$. As a result, $a_{k-1} = 0$, which contradicts $4 - 8a_{k-1} \le 0$ from the coefficient of the term $x^{3k+c-1}$.
\end{proof}

We conclude this section showing that the pair $(n,n+1)$ is realizable for almost all $n \in \nn_0$.

\begin{proposition} \label{prop:m strong atoms and m+1 atoms}
    The pair $(n,n+1)$ is realizable if and only if $n \ge 4$.
\end{proposition}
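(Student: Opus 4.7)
The plan is to split the biconditional into its two directions. The sufficient direction ($n \ge 4$) is immediate from Theorem~\ref{thm:4k_5k_realize} applied to $(k, c) = (1, n - 4) \in \nn \times \nn_0$: this produces an algebraic number $\beta$ with $f(\beta) = (4\cdot 1 + (n-4), 5\cdot 1 + (n-4)) = (n, n+1)$, and no further work is needed.

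For the necessary direction, I would assume for contradiction that $f(\alpha) = (n, n+1)$ for some $\alpha \in \aaa$ and some $n \in \{0, 1, 2, 3\}$, and derive a contradiction in each case. When $n = 0$, Theorem~\ref{thm:atomicity of M alpha} together with $|\mathcal{A}(M_\alpha)| = 1$ forces $\mathcal{A}(M_\alpha) = \{1\}$ and hence $M_\alpha = \nn_0$, in which the sole atom $1$ is trivially a strong atom; this contradicts $|\mathcal{S}(M_\alpha)| = 0$. For $n \in \{1, 2, 3\}$, the finiteness of $|\mathcal{A}(M_\alpha)|$ combined with Theorem~\ref{thm:atomicity of M alpha} ensures that $M_\alpha$ is finitely generated, so Corollary~\ref{cor:more strong atoms than degree} yields $d := \deg m_\alpha(x) \le |\mathcal{S}(M_\alpha)| = n$. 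The plan is then to split on $d$. When $d = 1$ (applicable to all three values of $n$), the number $\alpha$ is rational; applying Corollary~\ref{cor:minimal poly condition for atoms}(1) together with Gauss's Lemma to the resulting monic multiple of $m_\alpha(x)$ shows $m_\alpha(x) \in \zz[x]$, so $\alpha \in \zz$, and a direct inspection gives $|\mathcal{A}(M_\alpha)| \in \{0, 1\}$, contradicting $|\mathcal{A}(M_\alpha)| = n+1 \ge 2$. When $d = 2$ (applicable only to $n \in \{2, 3\}$), Proposition~\ref{prop:deg-2 minimal polynomial} enumerates the five possible values of $f(\alpha)$, namely $(0,0)$, $(0,\infty)$, $(1,\infty)$, $(2,2)$, and $(2,\infty)$, none of which is of the form $(n, n+1)$. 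Finally, when $d = 3$ (applicable only to $n = 3$), Corollary~\ref{cor:minimal poly condition for atoms}(2) applied to $|\mathcal{S}(M_\alpha)| = 3$ produces a witness polynomial $q(x)$ of degree $3 - d = 0$, i.e., a positive constant; consequently $m_\alpha(x)$ itself has positive leading coefficient and all non-leading coefficients non-positive, and applying Corollary~\ref{cor:minimal poly condition for atoms}(1) with $p(x) = 1$ then yields $|\mathcal{A}(M_\alpha)| = 3$, contradicting $|\mathcal{A}(M_\alpha)| = 4$.

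The main obstacle I anticipate is the $d = 1$ subcase, where one must rule out all rational $\alpha$; the cleanest route is to invoke Corollary~\ref{cor:minimal poly condition for atoms}(1) together with Gauss's Lemma to reduce to $\alpha \in \zz$, then handle the nonnegative and negative integer cases separately (observing that negative integer $\alpha$ gives $M_\alpha = \zz$, which is a group with no atoms). The $d = 3$ subcase is short but requires the sharp interplay between both halves of Corollary~\ref{cor:minimal poly condition for atoms}, crucially exploiting that the minimal witness $q(x)$ must have degree exactly $0$.
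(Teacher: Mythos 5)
Your overall strategy is the same as the paper's: realize $(n,n+1)$ for $n\ge 4$ via Theorem~\ref{thm:4k_5k_realize} with $k=1$, $c=n-4$; for $n\le 3$ use Corollary~\ref{cor:more strong atoms than degree} to bound $\deg m_\alpha(x)\le n$, dispose of degree $2$ with Proposition~\ref{prop:deg-2 minimal polynomial}, treat degree $3$ (for $n=3$) via the sign-pattern statement of Lemma~\ref{lem:strong atom as sum of lower degree terms}, and handle $n=0$ directly. Your explicit treatment of the degree-$1$ case (reducing to $\alpha\in\zz$ by Gauss's Lemma and inspecting $M_\alpha$) is actually more careful than the paper, which passes over it silently, and your $n=0$ and degree-$2$ arguments match the paper's.

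There is, however, a genuine gap in your $d=3$ subcase. Lemma~\ref{lem:strong atom as sum of lower degree terms} (or the existence reading of Corollary~\ref{cor:minimal poly condition for atoms}(2)) only gives a degree-$0$ multiplier over $\qq$, so what you obtain is $m_\alpha(x)=x^3-q_2x^2-q_1x-q_0$ with $q_0,q_1,q_2\in\qq_{\ge 0}$ --- \emph{not} necessarily in $\nn_0$. Corollary~\ref{cor:minimal poly condition for atoms}(1) with $p(x)=1$ requires the non-leading coefficients to lie in $\nn_0$, i.e.\ it requires $m_\alpha(x)\in\zz[x]$ (equivalently, that the primitive integer form of $m_\alpha(x)$ is monic), and as written you have not ruled out the non-integral case, where your conclusion $|\mathcal{A}(M_\alpha)|=3$ is false (there the monoid has infinitely many atoms). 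The paper handles exactly this dichotomy: if $m(x)$ is monic over $\zz$ it invokes the UFM characterization (Theorem~\ref{thm:factoriality characterization}) to get $3$ atoms, and otherwise it argues via Gauss's Lemma that $m(x)$ cannot divide any polynomial $x^k-\sum_{i=0}^{k-1}b_ix^i$ with $b_i\in\nn_0$, so $M_\alpha$ would have infinitely many atoms --- either way contradicting $|\mathcal{A}(M_\alpha)|=4$. The fix is short and is already in your own toolkit: since $|\mathcal{A}(M_\alpha)|=4<\infty$, the relation expressing $\alpha^4$ as a sum of atoms gives a monic integer polynomial divisible by $m_\alpha(x)$, and Gauss's Lemma (exactly as in your $d=1$ subcase) forces $m_\alpha(x)\in\zz[x]$; only then is your application of Corollary~\ref{cor:minimal poly condition for atoms}(1) with $p(x)=1$ legitimate.
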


\begin{proof}
    It follows from Theorem~\ref{thm:4k_5k_realize} that the pair $(4+c, 5+c)$ is realizable for all $c \in \nn_0$. For $n<4$, we claim that $(n,n+1)$ is not realizable. Consider $n=3$. Suppose that for some $\alpha\in\aaa$ the monoid $M_\alpha$ has $3$ strong atoms and $4$ atoms. Then it follows from Corollary~\ref{cor:more strong atoms than degree} that $\deg \, m(x) \le 3$. Suppose for the sake of a contradiction that $\deg m(x) = 3$. Then by Lemma~\ref{lem:strong atom as sum of lower degree terms}, the leading coefficient of $m(x)$ is positive while the rest of its coefficients are non-positive. If $m(x)$ is monic, then $M_\alpha$ must have $3$ atoms by \cite[Theorem~5.4]{CG22}, and otherwise it must have infinitely many atoms as $m(x)$ cannot divide any polynomial $x^k-\sum_{i=0}^{k-1} b_ix^i$ with nonnegative coefficients $b_0, \dots, b_{k-1}$. Thus, $\deg m(x) \le 2$, so the result of Proposition~\ref{prop:deg-2 minimal polynomial} applies. For $n\in\{1,2\}$, we have $\deg m(x) \le 2$ by Corollary~\ref{cor:more strong atoms than degree}. For $n=0$, note that $1$ must be the only atom, and thus $M_\alpha=\nn_0$, which has one strong atom.
\end{proof}


\bigskip
\section*{Acknowledgments}

While working on this paper, the authors were part of the first CrowdMath Internship (CMI 2025), a semester-long math research program hosted by the MIT Mathematics department. The authors would like to thank Dr.~Felix Gotti for his guidance and support as a CMI research mentor. The authors are grateful to the programs MIT PRIMES and CrowdMath for providing this rewarding research opportunity.


\bigskip

\end{document}